\documentclass[12pt]{amsart}
\calclayout

\usepackage{amsthm,amssymb,latexsym}

\usepackage[abbrev]{amsrefs}
\usepackage{hyperref}

\usepackage{xcolor}
\usepackage{cite}

\newcommand{\norm}[1]{\left\|#1\right\|}                

\theoremstyle{plain}
\newtheorem{thm}{Theorem}[section]
\newtheorem{lem}[thm]{Lemma}
\newtheorem{prop}[thm]{Proposition}
\newtheorem{cor}[thm]{Corollary}

\newtheorem{mainthm}{Theorem}

\theoremstyle{definition}
\newtheorem{defn}[thm]{Definition}

\theoremstyle{remark}
\newtheorem{rmk}[thm]{Remark}

\numberwithin{equation}{section}
\title{Boundary Behavior of Bisectional Curvatures for Weighted Bergman Metrics}
\author{Sungmin Yoo}
\address{Department of Mathematics, Incheon National University, 
119 Academy-ro, Yeonsu-gu, Incheon 22012, Republic of Korea}
\email{sungminyoo.math@inu.ac.kr}

\begin{document}

\maketitle


\begin{abstract}
This paper investigates the asymptotic boundary behavior of the holomorphic bisectional curvature for weighted Bergman metrics. By characterizing extremal functions via $L^2$-orthogonal projections, we establish an explicit formula for the weighted bisectional curvature. 
Utilizing the squeezing function, we then obtain quantitative upper and lower bounds for the curvature on bounded pseudoconvex domains. 
Furthermore, we prove that at strongly pseudoconvex boundary points, the bisectional curvature asymptotically coincides with that of the unit ball. 
As an application, these results provide a streamlined and unified proof for the known asymptotic behavior of the bisectional curvature of the K\"ahler-Einstein metric.
\end{abstract}

\section{Introduction}

The asymptotic behavior of the Bergman metric near the boundary of pseudoconvex domains has been a central theme in complex geometry for several decades. The foundational work of Klembeck \cite{klembeck1978kahler}, based on Fefferman’s asymptotic expansion of the Bergman kernel, first established that the holomorphic sectional curvature of a strictly pseudoconvex domain approaches a negative constant near the boundary. This analytic approach, while powerful, often obscured the underlying geometric intuition.\medskip

The Pinchuk scaling method was significantly adapted by Kim and Yu \cite{kim1996boundary} to analyze the boundary behavior of Bergman curvature invariants. This scaling approach offered a more geometric alternative to the classical analytic methods, and was later extensively reviewed and popularized in the survey by Kim and Krantz \cite{kim2003contemporary}, which has since served as a key reference for the study of Bergman metric invariants.\medskip

Following the introduction of holomorphic homogeneous regular domains by Liu-Sun-Yau \cite{liu2004canonical}, Yeung \cite{yeung2009} formalized the concept of uniformly squeezing domains. These developments led to the emergence of the squeezing function as a central tool for quantifying the proximity of a domain to the unit ball. Building on these concepts, Zhang \cite{zhang2017intrinsic} demonstrated that the squeezing function provides a systematic and direct alternative to the technical complexities of classical scaling methods.\medskip

While study of the holomorphic sectional curvature of the classical Bergman metric is  well-established, it is a natural and essential step to extend these investigations to the {\it weighted} Bergman metric and the holomorphic {\it bisectional} curvature. 
The bisectional curvature, in particular, captures the intricate interactions between different complex directions, yet its explicit representation has often been less emphasized in the existing literature.
In this work, we aim to provide a unified treatment for the weighted bisectional curvature. Our approach is characterized by its methodological simplicity; instead of the traditional Lagrange multiplier method, we utilize a structural approach based on the orthogonal projections. 
By identifying the extremal functions as projections within the weighted Bergman space, we refine the previously recorded identities and establish a canonical representation.\medskip

The primary contributions of this paper are summarized in the following theorems. First, we provide the exact representation of the bisectional curvature of weighted Bergman metrics:

\begin{mainthm}[Theorem \ref{thm: polarized bergman-fuks}]
     Let $\Omega \subset \mathbb{C}^n$ be a bounded domain with an admissible weight $\mu\in L^1(\Omega)$.
     Then the bisectional curvature of the weighted Bergman metric $g_{\Omega,\mu}$ is given by
     $$
 B_{g_{\Omega,\mu}}(p;X,Y)=2-\left(1-\frac{|\langle X,Y\rangle_{ g_{\Omega,\mu}(p)}|^2}{|X|_{g_{\Omega,\mu}(p)}^2|Y|_{g_{\Omega,\mu}(p)}^2}\right)-\frac{I^1_{\Omega,\mu}(p;X)I^1_{\Omega,\mu}(p;Y)}{I^2_{\Omega,\mu}(p;X,Y)I^0_{\Omega,\mu}(p)},
$$
where $I^k_{\Omega,\mu}$ denote the minimum integrals (see Definition \ref{def: minimum integrals}).
\end{mainthm}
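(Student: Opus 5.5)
Write $K=K_{\Omega,\mu}(z,z)$ for the weighted Bergman kernel on the diagonal and $g_{i\bar j}=\partial_i\partial_{\bar j}\log K$. The minimum integrals are presumably
\[
I^0(p)=\min\{\|f\|^2_\mu: f(p)=1\}=1/K(p,p),
\]
\[
I^1(p;X)=\min\{\|f\|^2_\mu: f(p)=0,\ Xf(p)=1\},
\]
\[
I^2(p;X,Y)=\min\{\|f\|^2_\mu: f(p)=0,\ df(p)=0,\ \partial_X\partial_Y f(p)=1\}.
\]
The plan has three steps: identify each minimizer as a normalized orthogonal projection, express every $I^k$ through derivatives of $K$, and then compare with the tensor formula for the bisectional curvature. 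Nothing beyond Definition \ref{def: minimum integrals} and the reproducing property of $K$ is needed.

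\textbf{Step 1 (projections).} Let $H=A^2_\mu(\Omega)$ and take an orthonormal basis $\{\phi_j\}$. Consider the reproducing vectors $k_p=K(\cdot,p)$, $k_X=\overline{X}K(\cdot,p)$, and $k_{XY}$. These represent, respectively, the evaluation functional, the derivative in the direction $X$, and the mixed second derivative $\partial_X\partial_Y$ at $p$. The functionals involved are orthogonality to $\{k_p\}$, or to $\{k_p,k_{e_1},\dots,k_{e_n}\}$, together with the normalization $\langle f,v\rangle=1$.

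By the elementary Hilbert-space lemma, $\min\{\|f\|^2: f\perp V,\ \langle f,v\rangle=1\}=1/\|P_{V^\perp}v\|^2$. Applying it gives $I^0=1/\|k_p\|^2=1/K$. Likewise $I^1(X)=1/\|P_{k_p^\perp}k_X\|^2$, and $I^2(X,Y)=1/\|P_{W^\perp}k_{XY}\|^2$ with $W=\mathrm{span}\{k_p,k_{e_i}\}$.

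\textbf{Step 2 (Gram computations).} A direct computation gives $\|P_{k_p^\perp}k_X\|^2=K\,g(X,\bar X)$, since $\partial\bar\partial\log K=(KK_{i\bar j}-K_iK_{\bar j})/K^2$. Hence $I^1(X)I^0=1/(K^2|X|_g^2)$.

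For $I^2$, I expect the key identity
\[
\|P_{W^\perp}k_{XY}\|^2=K\bigl(\partial_X\partial_Y\partial_{\bar X}\partial_{\bar Y}\log K+g(X,\bar X)g(Y,\bar Y)+|g(X,\bar Y)|^2\bigr)-(\text{Christoffel correction}).
\]
Rather than expanding Gram determinants, I will choose normal holomorphic coordinates for $g$ at $p$. Concretely, I take a polynomial change of variables $w=z+O(z^2)$ so that $\partial g_{i\bar j}(p)=0$, and I rescale the kernel by a holomorphic factor so that $K_i(p)=0$. Both operations only multiply every minimum integral by a fixed nonzero constant. After this normalization $W^\perp$-projection simply removes the first-order part, and the norm becomes
\[
K\,R(X,\bar X,Y,\bar Y)\ \text{up to sign}\;+\;K\bigl(g_{X\bar X}g_{Y\bar Y}+|g_{X\bar Y}|^2\bigr).
\]
In these normal coordinates $R_{X\bar XY\bar Y}=-\partial_X\partial_{\bar X}\partial_Y\partial_{\bar Y}\log K$.

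\textbf{Step 3 (assemble).} Combining the pieces,
\[
\frac{I^1(X)I^1(Y)}{I^2I^0}=\frac{\|P k_{XY}\|^2}{K^2|X|^2|Y|^2}\cdot\frac{1}{K}\cdot K=1+\frac{|\langle X,Y\rangle|^2}{|X|^2|Y|^2}-B(p;X,Y).
\]
Here $B=R_{X\bar XY\bar Y}/(|X|^2|Y|^2)$ with the sign convention fixed by the paper. Rearranging gives exactly the stated formula
\[
B=2-\Bigl(1-\frac{|\langle X,Y\rangle|^2}{|X|^2|Y|^2}\Bigr)-\frac{I^1I^1}{I^2I^0}.
\]

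\textbf{Main obstacle.} The hard part is Step 2 for $I^2$. One must justify that the normalization (coordinate change plus holomorphic rescaling) transforms all the minimum integrals compatibly. One must also track exactly how the projection onto $W^\perp$ produces the correction term $|g_{X\bar Y}|^2$ together with the curvature tensor, rather than some other combination. I would first sanity-check the constants on the unit ball, where $B=-\frac{2}{n+1}\cdot\bigl(1+|\langle X,Y\rangle|^2/(|X|^2|Y|^2)\bigr)$ for the unweighted metric is known.
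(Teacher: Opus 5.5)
Your proposal is correct. Step 1 and the $I^0$, $I^1$ computations coincide with the paper's, but you handle $I^2$ by a different route.

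\textbf{Comparison with the paper.} The paper works in arbitrary coordinates. It computes $\|\tilde P_d\tilde h_{XY}\|^2$ by subtracting the squared norm of the projection onto $\mathrm{span}\{\tilde h_1,\dots,\tilde h_n\}$, using the inverse Gram matrix of $\langle\tilde h_i,\tilde h_j\rangle=Kg_{i\bar j}$. It then matches the resulting expression in $K$, $K_{\bar j}$, $K_{XY}$, $K_{XY\bar j}$, $K_{X\bar XY\bar Y}$ term by term with Kobayashi's identity for the curvature of $\partial\bar\partial\log K$. You instead remove all cross terms by a holomorphic gauge and normal coordinates, which amounts to reproving Kobayashi's identity at one point. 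Your route is shorter and self-contained, and it shows where each of the three terms comes from. The paper's route needs no invariance checks, and it reuses the same Gram matrix for the canonical-function formula.

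\textbf{What Step 2 still needs.} It is currently an expectation; to make it a proof, state the facts that make it work.

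First, the gauge. Take $u\mapsto e^{\ell}u$ with $\ell$ linear and $\partial_i\ell(p)=-K_i(p)/K(p)$. This is an isometry $\mathcal{A}^2(\Omega,\mu)\to\mathcal{A}^2(\Omega,|e^{\ell}|^{-2}\mu)$. Because the lower-order constraints vanish, it carries each constraint set onto the corresponding one with normalization rescaled by $e^{\ell(p)}$. Hence every $I^k(p)$ is multiplied by $|e^{\ell(p)}|^{-2}$, and $g$ is unchanged.

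Second, the chart. A local chart $w=z+O(|z-p|^2)$ changes no $I^k(p)$. Under the lower-order constraints the relevant derivatives at $p$ are tensorial, and the linear part of the chart is the identity.

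Third, the orthogonality. Once $K_i(p)=0$, one has $\partial_kg_{i\bar j}(p)=K_{ki\bar j}(p)/K(p)$. So normal coordinates give $\langle k_{XY},k_{e_j}\rangle=0$, while $\langle k_{e_j},k_p\rangle=K_{\bar j}(p)=0$. Hence $\|P_{W^\perp}k_{XY}\|^2=K_{X\bar XY\bar Y}-|K_{XY}|^2/K$.

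Fourth, the expansion. At a critical point of $K$,
\[
\partial_X\partial_{\bar X}\partial_Y\partial_{\bar Y}\log K=\frac{K_{X\bar XY\bar Y}}{K}-\frac{K_{X\bar X}K_{Y\bar Y}+K_{X\bar Y}K_{Y\bar X}+K_{XY}K_{\bar X\bar Y}}{K^2}.
\]
So the $k_p$-projection cancels exactly the $K_{XY}K_{\bar X\bar Y}$ pairing, and
\[
\|P_{W^\perp}k_{XY}\|^2=K\bigl(-R_{X\bar XY\bar Y}+|X|_g^2|Y|_g^2+|\langle X,Y\rangle_g|^2\bigr).
\]
The sign of $R$ here is definite, not ``up to sign''.

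\textbf{Two slips.} In Step 3 the correct bookkeeping is $I^1(X)I^1(Y)/(I^2I^0)=\|P_{W^\perp}k_{XY}\|^2/(K|X|_g^2|Y|_g^2)$. Your display has $K^2$ in the denominator, and the factor $\frac1K\cdot K$ is just $1$. For the unweighted ball, $B=-\frac{1}{n+1}\bigl(1+|\langle X,Y\rangle|^2/(|X|^2|Y|^2)\bigr)$, i.e.\ $H=-\frac{2}{n+1}$; this is the case $m=1$ of the paper's ball computation. Your sanity check, as written with $-\frac{2}{n+1}$, would therefore fail.
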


By leveraging this identity alongside the squeezing function $s_{\Omega}$, we establish the following quantitative estimates for the weighted Bergman metric $g^{\rm KE}_{\Omega,m}$ with the K\"ahler-Einstein weight $\mu:=e^{-(m-1)\varphi^{KE}}$ with Ricci curvature $-1$.

\begin{mainthm}[Theorem \ref{thm: quantative estimates}]
    Let $\Omega$ be a bounded pseudoconvex domain and let $p\in\Omega$. Then, the bisectional curvature of the weighted Bergman metric $g^{\rm KE}_{\Omega,m}$ satisfies the following inequality:
\begin{align*}
    &\frac{m(n+1)+1}{m(n+1)}\left(1-s_{\Omega}(p)^{-2(mn+1)}\right)\left(2+s_{\Omega}(p)^{-2(mn+1)}\right)\\
    \leq&\
    B_{\Omega,m}^{KE}(p;X,Y)+\frac{1}{m(n+1)}\left(1+\frac{|\langle X,Y\rangle_{ g^{\rm KE}_{\Omega,m}(p)}|^2}{|X|_{g^{\rm KE}_{\Omega,m}(p)}^2|Y|_{g^{\rm KE}_{\Omega,m}(p)}^2}\right)\\
    \leq&\
    2\frac{m(n+1)+1}{m(n+1)}\left(1-s_{\Omega}(p)^{4(mn+1)}\right).
\end{align*}
\end{mainthm}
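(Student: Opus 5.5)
The proof combines the formula of Theorem \ref{thm: polarized bergman-fuks} with a comparison of the minimum integrals $I^0,I^1,I^2$ for $\Omega$ against those of the unit ball $\mathbb{B}^n$, using the squeezing function. Since the weight $\mu=e^{-(m-1)\varphi^{KE}}$ is built from the K\"ahler--Einstein potential, which transforms canonically under biholomorphisms, both the weighted Bergman metric $g^{\rm KE}_{\Omega,m}$ and the minimum integrals are biholomorphic invariants once the Jacobian factors are accounted for.

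\textbf{Step 1: normalization.} Fix $p$ and an $\varepsilon>0$. Choose a holomorphic embedding $f:\Omega\to\mathbb{B}^n$ with $f(p)=0$ and $B(0,r)\subset f(\Omega)\subset\mathbb{B}^n$, where $r>s_\Omega(p)-\varepsilon$. By invariance we may replace $\Omega$ by $D=f(\Omega)$ and $p$ by $0$.

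\textbf{Step 2: comparison of the weighted measures.} Monotonicity of the K\"ahler--Einstein metric gives, on $B_r:=B(0,r)\subset D\subset\mathbb{B}^n$,
$$\varphi^{KE}_{\mathbb{B}^n}\le\varphi^{KE}_D\le\varphi^{KE}_{B_r}$$
up to the normalization, i.e.\ $\det$ comparisons of volume forms. Explicitly, $\varphi^{KE}_{\mathbb{B}}=-(n+1)\log(1-|z|^2)$, with the scaled version on $B_r$.

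Next we need comparisons $I^k_{B_r}\lesssim I^k_D\le I^k_{\mathbb{B}}$. The minimum integrals are monotone under domain inclusion: restricting a competitor from the larger domain gives a competitor on the smaller one, and the weights are monotone in the right direction. Computing on the ball and the rescaled ball produces factors $r^{2(mn+1)}$. This accounts for the exponent: $I^0$ on the ball of radius $r$ with weight $(1-|z|^2/r^2)^{(m-1)(n+1)}$ scales like $r^{2n}\cdot(\text{normalization})$. Combined with the Jacobian factors of the weight, this yields ratios
$$\frac{I^k_D(0)}{I^k_{\mathbb{B}}(0)}\in\left[s^{2(mn+1)},\,1\right]$$
(or with $s^{-2(mn+1)}$), possibly with different powers for $k=0,1,2$ according to the homogeneity of the extremal monomials.

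\textbf{Step 3: explicit ball values and assembly.} On $\mathbb{B}^n$ all three integrals are computed from the monomials $1$, $z_j$, $z_jz_k$ against the weight $(1-|z|^2)^{m(n+1)-n-1}$. This gives the ball curvature
$$B=-\frac{1}{m(n+1)}\left(1+\frac{|\langle X,Y\rangle|^2}{|X|^2|Y|^2}\right).$$
It explains the shift on the middle term and the constant $\frac{m(n+1)+1}{m(n+1)}$ arising from the ratio $\frac{I^1I^1}{I^2I^0}$ on the ball. The metric term in Theorem \ref{thm: polarized bergman-fuks} is exact, so only the last quotient must be estimated. The ratio $Q=\frac{I^1I^1}{I^2I^0}$ then sits between $Q_{\mathbb{B}}\cdot s^{4(mn+1)}$ and $Q_{\mathbb{B}}\cdot s^{-2(mn+1)}(\ldots)$. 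Substituting into the formula and subtracting the ball value gives both inequalities; letting $\varepsilon\to0$ finishes the proof.

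\textbf{Main obstacle.} The delicate point is Step 2 for $I^2$ and the lower bound. The extremal problem also involves constraints defined through the metric, i.e.\ through $I^0$ and $I^1$, so monotonicity is not immediate. The comparison must hold for the projected (orthogonal-complement) extremals, not just the raw norms. I would handle this by writing each $I^k$ as a minimum over a subspace defined by vanishing conditions at $0$. Those conditions are pointwise and survive restriction, so domain monotonicity applies directly. The asymmetric powers $-2(mn+1)$ and $4(mn+1)$ then come from using the upper bound for one factor and the lower bound for two factors, respectively.
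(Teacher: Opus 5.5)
Your outline follows the paper's route: squeezing map, Schwarz lemma for the K\"ahler--Einstein volume forms giving $\varphi_{\mathbb{B}^n}\le\varphi_D\le\varphi_{B_r}$, monotonicity of the minimum integrals, and explicit ball values. Using an $\varepsilon$-almost extremal squeezing map instead of the Deng--Guan--Zhang extremal map is harmless. But Step~3 has a genuine gap: it is not true that ``only the last quotient must be estimated''. Write $s=s_\Omega(p)$, $k=mn+1$, $D_m=\frac{m(n+1)+1}{m(n+1)}$, $X'=df_p(X)$, $Y'=df_p(Y)$. Let $a_g=|\langle X,Y\rangle_{g}|^2/(|X|_g^2|Y|_g^2)$ be the angle in $g^{\rm KE}_{\Omega,m}(p)$, and let $a=|\langle X',Y'\rangle_{\mathbb{C}^n}|^2/(|X'|_{\mathbb{C}^n}^2|Y'|_{\mathbb{C}^n}^2)$ be the Euclidean angle at $0$. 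By Theorem~\ref{thm: polarized bergman-fuks}, the quantity in the statement is $D_m(1+a_g)-T_D=D_m(2-S_D)-T_D$. Monotonicity, however, only compares $T_D$ with $T_{\mathbb{B}^n}(0;X',Y')=D_m(1+a)$, which involves $a$, not $a_g$. From the $T$-comparison alone you get $\le D_m(1+a_g)-s^{4k}D_m(1+a)$. With no relation between $a_g$ and $a$, this is no better than $D_m(2-s^{4k})$, which does not tend to $0$ as $s\to1$. The missing ingredient is the paper's second comparison. Since $S_D=1-a_g=I^1(X')/I^1(X'|Y')$ is itself a quotient of minimum integrals, Lemma~\ref{lem: monotone} and Lemma~\ref{lem: minimum integrals of ball} give $s^{2k}(1-a)\le 1-a_g\le s^{-2k}(1-a)$. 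This comparison, not ``the upper bound for one factor'' of $T$, is the source of the exponent $-2(mn+1)$.

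Your exponent bookkeeping also has to be made exact. On $B_r$ one has $I^0\propto r^{2mn}$, $I^1\propto r^{2(mn+1)}$, $I^2\propto r^{2(mn+2)}$, hence $s^{4k}\le T_D/T_{\mathbb{B}^n}\le s^{-4k}$; the upper ratio is $s^{-4k}$, not $s^{-2k}(\ldots)$. Inserting both comparisons into $D_m(2-S_D)-T_D$ and using $0\le a\le 1$ yields the upper bound $2D_m(1-s^{4k})$, with worst case $a=1$. On the lower side you obtain $D_m\bigl[2-s^{-2k}(1-a)-s^{-4k}(1+a)\bigr]$, which is decreasing in $a$. It equals the displayed $D_m(1-s^{-2k})(2+s^{-2k})$ only at $a=0$, and its minimum over $a\in[0,1]$ is $2D_m(1-s^{-4k})$. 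So ``substituting gives both inequalities'' does not produce the displayed lower bound for general $X,Y$. The paper's concluding step has the same defect: its own bound for $H$, which is the case $a=1$, is exactly $2D_m(1-s^{-4k})$. That is weaker than what the displayed bisectional bound would give at $X=Y$. Finally, the ``main obstacle'' you raise is not one. All the $I^j$, including $I^2$ and $I^1(\cdot|\cdot)$, are defined by vanishing conditions at the point, so monotonicity under $B_r\subset D\subset\mathbb{B}^n$ with the ordered weights is immediate.
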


These quantitative bounds naturally lead to the following asymptotic behavior as the point $p$ approaches a strongly pseudoconvex boundary point:

\begin{mainthm}[Theorem \ref{thm: asymptotic behavior of the curvatures}]
     Let $\Omega$ be a bounded pseudoconvex domain and let $\{p_j\} \subset \Omega$ be a sequence such that $\lim\limits_{j\rightarrow\infty}s_{\Omega}(p_j)=1$.
     Then, we have
    $$
    \lim\limits_{j\rightarrow\infty}\left(B_{\Omega,m}^{KE}(p_j;X,Y)+\frac{1}{m(n+1)}\left(1+\frac{|\langle X,Y\rangle_{ g^{\rm KE}_{\Omega,m}(p_j)}|^2}{|X|_{g^{\rm KE}_{\Omega,m}(p_j)}^2|Y|_{g^{\rm KE}_{\Omega,m}(p_j)}^2}\right)\right)=0.
    $$
\end{mainthm}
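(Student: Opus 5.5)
The plan is to deduce this from the quantitative two-sided estimate of Theorem \ref{thm: quantative estimates} by a squeeze argument. Write
$$
Q_j:=B_{\Omega,m}^{KE}(p_j;X,Y)+\frac{1}{m(n+1)}\left(1+\frac{|\langle X,Y\rangle_{ g^{\rm KE}_{\Omega,m}(p_j)}|^2}{|X|_{g^{\rm KE}_{\Omega,m}(p_j)}^2|Y|_{g^{\rm KE}_{\Omega,m}(p_j)}^2}\right).
$$
Applying Theorem \ref{thm: quantative estimates} at each $p_j$ (which is legitimate, since $\Omega$ is bounded pseudoconvex and $p_j\in\Omega$) gives
$$
L(s_{\Omega}(p_j))\le Q_j\le U(s_{\Omega}(p_j)),
$$
where
$$
L(s)=\frac{m(n+1)+1}{m(n+1)}\bigl(1-s^{-2(mn+1)}\bigr)\bigl(2+s^{-2(mn+1)}\bigr),\qquad U(s)=2\frac{m(n+1)+1}{m(n+1)}\bigl(1-s^{4(mn+1)}\bigr).
$$
Both $L$ and $U$ are continuous functions of $s\in(0,1]$ with $L(1)=\frac{m(n+1)+1}{m(n+1)}\cdot 0\cdot 3=0$ and $U(1)=0$. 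Since the squeezing function satisfies $0<s_{\Omega}(p_j)\le1$ and $s_{\Omega}(p_j)\to1$ by hypothesis, continuity gives $L(s_{\Omega}(p_j))\to0$ and $U(s_{\Omega}(p_j))\to0$. The squeeze theorem then yields $Q_j\to0$, uniformly in the nonzero vectors $X,Y$, because the bounds do not depend on $X,Y$.

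The only point needing care is that the exponents are fixed and finite: $m$ and $n$ do not vary with $j$, so no uniformity issue arises. There is no real obstacle, since all of the analytic work is contained in Theorem \ref{thm: quantative estimates}.

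If one also wants the corollary for strongly pseudoconvex boundary points, I would combine this argument with the known fact that $s_\Omega(p)\to1$ as $p$ tends to a strongly pseudoconvex boundary point (Diederich--Forn\ae ss--Wold, Deng--Guan--Zhang). That corollary is separate from the statement proved here.
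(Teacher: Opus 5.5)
Your proposal is correct and follows the paper's route: the paper presents Theorem \ref{thm: asymptotic behavior of the curvatures} as an immediate corollary of Theorem \ref{thm: quantative estimates}, and your squeeze argument supplies the details it omits. One harmless caveat: the argument proving Theorem \ref{thm: quantative estimates} only justifies, uniformly in $X,Y$, the lower bound $2D_m\bigl(1-s_{\Omega}(p)^{-4(mn+1)}\bigr)$ (the stated $D_m\bigl(1-s_{\Omega}(p)^{-2(mn+1)}\bigr)\bigl(2+s_{\Omega}(p)^{-2(mn+1)}\bigr)$ is what it gives only when $\langle df_p(X),df_p(Y)\rangle_{\mathbb{C}^n}=0$), but this bound also tends to $0$ as $s_{\Omega}(p_j)\to1$, so your squeeze goes through unchanged.
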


This establishes that the boundary of the domain possesses the same asymptotic curvature structure as the unit ball, provided the squeezing number tends to one.
As a corollary, our results provide a streamlined alternative proof for the curvature asymptotics of the Kähler-Einstein metric on strongly pseudoconvex domains (see Corollary \ref{cor: ke asymptotic}), a classic result of Cheng and Yau \cite{cheng1980existence} recently revisited by Gontard \cite{gontard2019kahler}.


\section{Preliminaries}

In this section, we provide the essential definitions and properties required for our analysis. For a more detailed exposition of the weighted Bergman framework, we refer the reader to Pasternak-Winiarski \cite{pasternak1990dependence} and also the previous work of the author \cite{yoo2025invariant}.

Let $\Omega$ be a domain in $\mathbb{C}^n$ and $\mu$ be a positive measure on $\Omega$.
Consider the following $L^2$-subspace 
$$
\mathcal{A}^2(\Omega,\mu):=\left\{u\in\mathcal{O}(\Omega)\ \Big|\ \norm{u}^2_{\Omega,\mu}:=\int_{\Omega}|u|^2 d\mu<\infty\right\}.
$$
This is called the {\it weighted Bergman space} with respect to $\mu$, and the function $e^{-\varphi}$, defined by
$$
d\mu=e^{-\varphi}d\lambda,
$$
is called the {\it weight function}, where $d\lambda$ is the standard Lebesgue measure.
We say that the weight function $e^{-\varphi}$ is {\it admissible} if for any compact subset $A\subset\Omega$, there is a constant $C_A>0$ such that for all $u\in\mathcal{A}^2(\Omega,\mu)$, it satisfies the following Cauchy type inequality:
\begin{equation*}
    \sup_{z\in A}|u(z)|\leq C_A\norm{u}_{\Omega,\mu}.
\end{equation*}
In this case, the point evaluation map is a bounded linear functional on $\mathcal{A}^2(\Omega,\mu)$.
By the Riesz representation theorem, for each point $p\in\Omega$, there exist a function $K^p_{\Omega,\mu}$ satisfying that for any $u\in\mathcal{A}^2(\Omega,\mu)$,
\begin{equation}\label{eqn: reproducing property}
   u(p)=\langle u,K^p_{\Omega,\mu}\rangle_{\Omega,\mu}=\int_{\Omega}u(z)\ \overline{K^p_{\Omega,\mu}(z)}\ d\mu(z).
\end{equation}
Define the function $K_{\Omega,\mu}:\Omega\times\Omega\longrightarrow\mathbb{C}$ by
$$
K_{\Omega,\mu}(z,w):=K^w_{\Omega,\mu}(z).
$$
This is called the (off-diagonal) {\it weighted Bergman kernel function}.
Since $\mathcal{A}^2(\Omega,\mu)$ is a separable Hilbert space in this case, for any countable orthonormal basis $\{u^{j}\}$, we have
$$
K_{\Omega,\mu}(z,w)=\sum_{j} u^{j}(z)\overline{u^{j}(w)}.
$$
Let $(z_1,\ldots,z_n)$ be the standard Euclidean coordinates for $\mathbb{C}^n$ and denote $\partial_j:=\frac{\partial}{\partial z_j}$ and $\partial_{\overline{k}}:=\frac{\partial}{\partial \overline{z_k}}$.
If the diagonal part $K_{\Omega,\mu}(z):=K_{\Omega,\mu}(z,z)>0$ at $z\in\Omega$, we can consider the hermitian symmetric matrix
$$
G_{\Omega,\mu}(z)
=\left(g_{j\overline{k}}(z)\right)_{1\leq j,k\leq n},\ \ \ \ \ \ \ \ 
g_{j\overline{k}}(z)
:=
\partial_j\partial_{\overline{k}}\log K_{\Omega,\mu}(z).
$$
If $G_{\Omega,\mu}(z)$ is positive-definite everywhere, define the metric tensor $g_{\Omega,\mu}$  by
$$
\langle X,Y\rangle_{ g_{\Omega,\mu}}:=
\sum^n_{j,k=1}g_{j\overline{k}}(z)X_j\overline{Y_k},\ \ \ \ \ \ \ \ \
|X|_{ g_{\Omega,\mu}}:=\sqrt{\langle X,X\rangle_{ g_{\Omega,\mu}}}
$$
for $X:=\sum\limits_{j=1}^nX_j\partial_j,\ Y:=\sum\limits_{j=1}^nY_j\partial_j\in T^{1,0}\Omega$.
In this case, the K\"ahler metric tensor $g_{\Omega,\mu}$ is called the {\it weighted Bergman metric} of $\Omega$.
\begin{prop}[{\cite[Proposition 2.7]{yoo2025invariant}}]\label{prop: positivity of the weighted Bergman}
If $\Omega$ is a bounded domain with an admissible weight $\mu\in L^1(\Omega)$, then the diagonal part of weighted Bergman kernel is positive everywhere, and the weighted Bergman metric is positive-definite everywhere.
\end{prop}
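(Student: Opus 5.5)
Both claims reduce to a Hilbert space argument in $\mathcal{A}^2(\Omega,\mu)$. Boundedness of $\Omega$ together with $\mu\in L^1(\Omega)$ makes every polynomial an element of the space, since $\int_\Omega|P|^2\,d\mu\le \sup_\Omega|P|^2\,\mu(\Omega)<\infty$. Admissibility then gives bounded point evaluations and, via the Cauchy estimates on small polydiscs, bounded evaluations of derivatives. We work with the expansion $K_{\Omega,\mu}(z,w)=\sum_j u^j(z)\overline{u^j(w)}$.

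\textbf{Positivity of the diagonal.} Since $K_{\Omega,\mu}(z)=\sum_j|u^j(z)|^2\ge 0$, we need some $u\in\mathcal{A}^2(\Omega,\mu)$ with $u(z)\neq0$. The constant $1$ works, because $\|1\|^2=\mu(\Omega)\in(0,\infty)$; here $\mu(\Omega)>0$ because $\mu$ is a positive measure with density $e^{-\varphi}>0$. More quantitatively, we use the extremal characterization
\[
K_{\Omega,\mu}(z)=\sup\{|u(z)|^2:\ \|u\|_{\Omega,\mu}\le 1\},
\]
which follows from the reproducing property \eqref{eqn: reproducing property} and Cauchy--Schwarz. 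Testing with $u=1/\sqrt{\mu(\Omega)}$ gives $K_{\Omega,\mu}(z)\ge 1/\mu(\Omega)>0$.

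\textbf{Positive-definiteness of the metric.} Fix $p$ and $X\in T^{1,0}_p\Omega$ with $X\neq0$. The standard computation gives
\[
\sum_{j,k} g_{j\bar k}(p)X_j\overline{X_k}=\frac{K\,\sum_l|Xu^l(p)|^2-\left|\sum_l Xu^l(p)\,\overline{u^l(p)}\right|^2}{K^2},
\]
where $K=K_{\Omega,\mu}(p)$ and $Xu=\sum_j X_j\partial_ju$. The series converge because derivative evaluation is a bounded functional. Writing $a=(u^l(p))_l$ and $b=(Xu^l(p))_l$, both in $\ell^2$, the numerator equals $\|a\|^2\|b\|^2-|\langle b,a\rangle|^2\ge 0$ by Cauchy--Schwarz. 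Equality holds iff $b=c\,a$ for some $c\in\mathbb{C}$.

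Suppose equality held, so $b=c\,a$. Then $L(u):=Xu(p)-c\,u(p)$ vanishes on every $u^l$, since $L(u^l)=b_l-c\,a_l=0$. Every $u\in\mathcal{A}^2(\Omega,\mu)$ satisfies $u=\sum_l\langle u,u^l\rangle u^l$ in norm, and $L$ is norm-continuous because it is a combination of point and derivative evaluations bounded via admissibility and Cauchy estimates. Hence $L\equiv0$ on the whole space. Now test on polynomials, which lie in the space by the observation above. With $u=1$ we get $L(1)=-c=0$, so $c=0$. With the linear polynomial $u(z)=\sum_k\overline{X_k}(z_k-p_k)$ we get $u(p)=0$ and $Xu(p)=\sum_k|X_k|^2=|X|^2>0$, so $L(u)\neq0$, a contradiction. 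Thus the Hermitian form is strictly positive for $X\neq0$.

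The main technical point is justifying continuity of derivative evaluation and exchanging $X$ with the series, so that the formula for $g_{j\bar k}$ and the density argument are valid. We get this from admissibility applied on a compact polydisc around $p$ together with Cauchy's integral formula, which bounds $|\partial_j u(p)|\le C\|u\|_{\Omega,\mu}$. This also gives locally uniform convergence of the kernel series together with its derivatives.
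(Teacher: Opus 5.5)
Your proof is correct. It is essentially the standard argument behind the cited result; the paper itself only quotes it, but your two test functions $1$ and $\sum_k\overline{X_k}(z_k-p_k)$ are exactly what one feeds into the paper's identities $K_{\Omega,\mu}(p)=1/I^0_{\Omega,\mu}(p)\geq 1/\mu(\Omega)$ and $|X|^2_{g_{\Omega,\mu}(p)}=I^0_{\Omega,\mu}(p)/I^1_{\Omega,\mu}(p;X)$, where $I^1_{\Omega,\mu}(p;X)<\infty$ by the linear function and $I^0_{\Omega,\mu}(p)>0$ by admissibility. Your Cauchy--Schwarz equality case $b=c\,a$ is just the orthonormal-basis form of $P_0h_X=0$ in the paper's unified Bergman--Fuks proof, since your numerator equals $K_{\Omega,\mu}(p)\,\|P_0h_X\|^2$.
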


Recall that the coefficients of the curvature tensor of the K\"ahler metric $g$ is given by
$$
R_{i\overline{j}k\overline{l}}=-\partial_k\partial_{\overline{l}}\ g_{i\overline{j}}+\sum_{\alpha,\beta} g^{\alpha\overline{\beta}}\ \partial_kg_{i\overline{\beta}}\ \partial_{\overline{l}}g_{\alpha\overline{j}}.
$$
The {\it (holomorphic) bisectional curvature} along $X,Y$ is defined by
$$
B_{g}(X,Y)
:=
\left(\sum\limits_{i,j,k,l}R_{i\overline{j}k\overline{l}}X_i\overline{X_j}Y_k\overline{Y_l}\right)\bigg/|X|_g^2|Y|_g^2.
$$
The {\it holomorphic sectional curvature} along $X$ is given by
$$
H_{g}(X):=B_{g}(X,X).
$$
The {\it Ricci curvature} along $X$ is
$$
Ric_{g}(X):=\sum_{i=1}^nB_g(X,e_i).
$$
where $\{e_i\}$: an orthonormal basis with respect to $g$.
We say that $g$ is {\it Einstein} if it has constant Ricci curvature at every points and every directions.

Let $ g_{\Omega,\mu}$ be a positive-definite weighted Bergman metric.
The {\it Bergman canonical function} with respect to $\mu$ is defined by
$$
J_{\Omega,\mu}(z):=\frac{\det G_{\Omega,\mu}(z)}{K_{\Omega,\mu}(z)}.
$$
Unlike the classic setting, in general, this is not invariant under biholomorphisms.
The logarithm of the Bergman canonical function can be considered as the Ricci potential function of the metric $g_{\Omega,\mu_{\Omega}}$, in the sense that
$$
\partial\overline{\partial}\log J_{\Omega,\mu}=-\text{Ric}_{g_{\Omega,\mu_{\Omega}}}-g_{\Omega,\mu_{\Omega}}.
$$
In particular, if $\log J_{\Omega,\mu}$ is pluriharmonic, then $g_{\Omega,\mu_{\Omega}}$ is the K\"ahler-Einstein metric.


\section{Completeness and Invariance of the weighted Bergman metrics}

\subsection{Completeness of the weighted Bergman metrics on uniformly squeezing domains}

\begin{defn}
    Let $\Omega$ be a bounded domain in $\mathbb{C}^n$, and $p\in\Omega$. Let $\mathbb{B}^n_r$ be the ball in $\mathbb{C}^n$ with the radius $r$ centered at the origin.
    \begin{itemize}
        \item The {\it squeezing number} $s_{\Omega}(p)$ at $p$ is defined by
        $$
        s_{\Omega}(p):=\sup_f\{s_{\Omega}(p;f)\}
        $$
        where 
        $
        s_{\Omega}(p;f):=\sup\{r\ |\ \mathbb{B}^n_r\subset f(\Omega)\subset \mathbb{B}^n_1\},
        $
        and $f:\Omega\longrightarrow\mathbb{B}_1^n$ is a holomorphic injective map satisfying $f(p)=0$.\medskip
        \item The function $s_{\Omega}:\Omega\longrightarrow[0,1]$ is called the {\it squeezing function} of $\Omega$.\medskip
        \item We say that $\Omega$ is a {\it uniformly squeezing domain} (or holomorphic homogeneous regular domain) if the squeezing function has positive lower bound.
    \end{itemize}
\end{defn}
\begin{rmk}
    \begin{itemize}
        \item Deng-Guan-Zhang {\cite[Theorem 2.1]{deng2012some}} proved that for any $p\in\Omega$, there exists a holomorphic injective map $f_p:\Omega\longrightarrow\mathbb{B}_1^n$ satisfying $f_p(p)=0$ and $\mathbb{B}_{s_{\Omega}(p)}^n\subset f_p(\Omega)\subset \mathbb{B}_1^n$.\medskip
        \item For a bounded strongly pseudoconvex domain $\Omega$ with $C^2$ boundary, it is well known that the squeezing function $s_\Omega(p)$ tends to $1$ as $p$ approaches any boundary point $q \in \partial \Omega$ (see \cite[Theorem 1.3]{deng2016properties}). This asymptotic behavior plays a central role in identifying the limiting geometry of the metric.
    \end{itemize}
\end{rmk}

\begin{thm}[{\cite[Theorem 4.5]{deng2016properties}}]\label{thm: DGZ}
Let $\Omega$ be a uniformly squeezing domain.
Then the Caratheodory metric of $\Omega$ is complete.
\end{thm}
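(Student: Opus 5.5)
We plan to compare the Carathéodory metric of $\Omega$ with that of the unit ball, using the extremal embeddings $f_p$ from the Deng--Guan--Zhang remark. Write $c_\Omega$ for the Carathéodory pseudo-metric and $d^C_\Omega$ for the associated (inner) distance. The goal is a uniform lower bound comparable to the Poincaré distance of the ball. The classical route is to show that every closed $d^C_\Omega$-ball is compact, equivalently that $d^C_\Omega(p_0,p_j)\to\infty$ whenever $p_j\to\partial\Omega$.

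\textbf{Step 1 (local lower bound).} Fix $p\in\Omega$ and let $f_p$ be as in the remark, with $\mathbb{B}^n_{s}\subset f_p(\Omega)\subset\mathbb{B}^n_1$ and $s\ge c>0$ uniformly. Since $f_p(\Omega)\subset\mathbb{B}^n_1$ and $\mathbb{B}^n_1$ has the standard Carathéodory metric, the contraction property together with the inclusion $\mathbb{B}^n_s\subset f_p(\Omega)$ lets us compare $c_\Omega$ near $p$ with $c_{\mathbb{B}^n_1}$ at points of $f_p(\Omega)$. The key observation is that the Carathéodory distance from $p$ satisfies
$$d^C_\Omega(p,q)\ \ge\ \rho_{\mathbb{B}^n_1}(0,f_p(q)),$$
where $\rho$ is the Carathéodory (Poincaré) distance of the ball. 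This holds simply because $f_p:\Omega\to\mathbb{B}^n_1$ is holomorphic and distances decrease.

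\textbf{Step 2 (points near the boundary).} If $q\in\Omega$ satisfies $d^C_\Omega(p,q)\le r_0$ with $r_0$ small, then by Step 1 the point $f_p(q)$ lies in a small Poincaré ball around $0$, hence in $\mathbb{B}^n_{s/2}$ for a uniform $r_0$ depending only on $c$. We then use the inclusion $\mathbb{B}^n_s\subset f_p(\Omega)$ to conclude that $q$ stays a definite ``hyperbolic'' distance from $\partial\Omega$. More precisely, we argue that the $d^C_\Omega$-ball of radius $r_0$ about $p$ is relatively compact in $\Omega$: its image under $f_p$ lies in $\overline{\mathbb{B}^n_{s/2}}\subset f_p(\Omega)$, and $f_p^{-1}$ is a homeomorphism onto $\Omega$, so the preimage of this compact set is compact.

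\textbf{Step 3 (completeness).} Uniform radius $r_0$ of relatively compact $d^C$-balls implies that every $d^C_\Omega$-Cauchy sequence eventually lies in one such compact ball around one of its terms. It therefore has a limit point in $\Omega$, which gives Cauchy completeness. By Hopf--Rinow for the inner Carathéodory distance, this yields completeness of the metric.

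The main obstacle is Step 2, specifically the subtlety of whether $d^C_\Omega$ is the Carathéodory distance or its inner (integrated) version. Step 1 works for both, since the inner distance dominates the distance. So the plan is to use the lower bound with the non-inner distance and only then pass to the inner metric, which is legitimate because it is larger.
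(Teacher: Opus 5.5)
Your proof is correct. The paper does not prove this statement itself: it quotes it from Deng--Guan--Zhang and uses it only, via the Hahn--Lu comparison, to get completeness of $g_{\Omega,\mu}$. What you wrote is the standard argument for it. Write $c_\Omega$ for the Carath\'eodory distance. The distance-decreasing property of $f_p$ gives $c_\Omega(p,q)\ge\tanh^{-1}|f_p(q)|$. So if $\tanh r_0<\inf_\Omega s_\Omega$, every $c_\Omega$-ball of radius $r_0$ about $p$ lies in the compact set $f_p^{-1}\bigl(\overline{\mathbb{B}^n_{\tanh r_0}}\bigr)\subset\Omega$. A uniform radius of relatively compact balls then forces Cauchy sequences to converge.

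Two details should be made explicit.

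First, in Step 3 a Euclidean limit point $z$ of a subsequence is a $c_\Omega$-limit only because $c_\Omega$ is continuous for the Euclidean topology. For any holomorphic $f:\Omega\to\Delta$ with $f(z)=0$, Cauchy estimates give $|f(w)|\le C|w-z|$ for $w$ near $z$, with $C$ independent of $f$. Hence $c_\Omega(z,w)\le\tanh^{-1}(C|w-z|)$, and then a Cauchy sequence with a convergent subsequence converges.

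Second, Hopf--Rinow is not needed, and the inner versus non-inner issue you flag as the main obstacle is not a real one. Steps 1--3 go through verbatim for the integrated Carath\'eodory--Reiffen distance $c^i_\Omega$. Step 1 holds because $c^i_\Omega\ge c_\Omega$, and Step 3 holds because $C_\Omega(z;X)$ is locally bounded by a multiple of $|X|$. For the paper's application either version suffices. Integrating the Hahn--Lu inequality $C_\Omega(p;X)\le|X|_{g_{\Omega,\mu}(p)}$ along curves gives $d_{g_{\Omega,\mu}}\ge c^i_\Omega\ge c_\Omega$. So a $d_{g_{\Omega,\mu}}$-Cauchy sequence converges in $c_\Omega$, hence in the Euclidean topology, hence in $d_{g_{\Omega,\mu}}$.
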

In the paper, they also gave an alternative proof of the completeness of the classic Bergman metrics of uniformly squeezing domains, which were already proved by \cite{yeung2009}.
The method is called the {\it Hahn-Lu Comparison theorem}.
One can easily check by the same proof that the result also holds for the weighed Bergman metrics:
\begin{thm}[cf. {\cite[Theorem 5.1]{ahn2016positivity}}]\label{thm: Hahn-Lu}
Let $\Omega$ be a domain with an admissible weight $\mu$.
Assume that $K_{\mu}(p)>0$ at $p\in\Omega$.
Then for any $X\in\mathbb{C}^n$,
$$
C_{\Omega}(p;X)^2\leq |X|^2_{g_{\Omega,\mu}(p)},
$$
where $C_{\Omega}(p;X)$ is the Caratheodory length:
$$
C_{\Omega}(p;X):=\sup\{|df_p(X)|\ |\ f:\Omega\longrightarrow\Delta\ \ \text{holomorphic map},\  f(p)=0\}.
$$
\end{thm}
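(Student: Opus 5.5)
The plan is to realize the metric length as an extremal problem and compare it with a test function built from a Carathéodory competitor. Fix $p\in\Omega$, $X\in\mathbb{C}^n$, and a holomorphic map $f:\Omega\to\Delta$ with $f(p)=0$; we may assume $df_p(X)\neq0$. The standard extremal description of the weighted Bergman metric (which follows from the reproducing property \eqref{eqn: reproducing property} and the orthonormal-basis expansion of $K_{\Omega,\mu}$, and uses only admissibility and $K_{\Omega,\mu}(p)>0$) is
$$
|X|^2_{g_{\Omega,\mu}(p)}=\frac{1}{K_{\Omega,\mu}(p)}\sup\left\{|Xu(p)|^2\ \Big|\ u\in\mathcal{A}^2(\Omega,\mu),\ u(p)=0,\ \norm{u}_{\Omega,\mu}\leq 1\right\}.
$$
I would first rederive this briefly. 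Choose an orthonormal basis with $u^0=K^p_{\Omega,\mu}/\sqrt{K_{\Omega,\mu}(p)}$ and all other $u^j(p)=0$. Then expand $\partial\overline{\partial}\log K$ at $p$ and apply Cauchy–Schwarz to $Xu(p)=\langle u,\overline{X}_w K(\cdot,w)|_{w=p}\rangle$ restricted to the orthogonal complement of $K^p$.

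Next, take the test function $u:=f\cdot K^p_{\Omega,\mu}/\sqrt{K_{\Omega,\mu}(p)}$. Since $|f|<1$, we have $\norm{u}_{\Omega,\mu}\le \norm{K^p_{\Omega,\mu}}_{\Omega,\mu}/\sqrt{K_{\Omega,\mu}(p)}=1$, so $u\in\mathcal{A}^2(\Omega,\mu)$. Also $u(p)=0$, and by the product rule $Xu(p)=df_p(X)\,K_{\Omega,\mu}(p)/\sqrt{K_{\Omega,\mu}(p)}=df_p(X)\sqrt{K_{\Omega,\mu}(p)}$. Plugging this into the extremal formula gives $|X|^2_{g_{\Omega,\mu}(p)}\ge |df_p(X)|^2$. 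Taking the supremum over $f$ then yields $C_\Omega(p;X)^2\le |X|^2_{g_{\Omega,\mu}(p)}$.

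The main obstacle is justifying the extremal formula in the weighted setting without assuming positive-definiteness a priori. The inequality direction we need, $\sup\le K\cdot|X|^2_g$, has to hold even if $G$ is only semidefinite. To get it, I would write $K\,\partial\overline\partial\log K(X,\overline X)=\norm{P(\overline{X}K^{\cdot})}^2$, where $P$ is the projection onto the orthogonal complement of $K^p$. For any $u\perp K^p$ with norm at most one, Cauchy–Schwarz gives $|Xu(p)|^2\le \norm{P(\overline{X}K^\cdot)}^2$. Admissibility guarantees that derivatives of $K$ at $p$ are represented by $L^2$ elements, via Cauchy estimates on a small compact neighborhood.
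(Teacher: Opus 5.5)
Your proposal is correct and is essentially the argument the paper has in mind. The paper gives no separate proof; it defers to the standard Hahn--Lu comparison, which is exactly the extremal characterization $|X|^2_{g_{\Omega,\mu}(p)}=I^0_{\Omega,\mu}(p)/I^1_{\Omega,\mu}(p;X)$ tested against the competitor $f\cdot K^p_{\Omega,\mu}$. Your supremum formula is equivalent to that characterization, and to the projection identity in step (2) of the proof of the polarized Bergman--Fuks formula. Your projection argument for the one inequality you need, $\sup\le K_{\Omega,\mu}(p)\,|X|^2_{g_{\Omega,\mu}(p)}$, correctly avoids assuming positive-definiteness and uses only admissibility and $K_{\Omega,\mu}(p)>0$.
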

Therefore, Theorem \ref{thm: DGZ}, \ref{thm: Hahn-Lu}, and Proposition \ref{prop: positivity of the weighted Bergman} imply the following

\begin{prop}\label{prop: completness of the weightd Bergman metric}
    Let $\Omega$ be a uniformly squeezing domain with an admissible weight $\mu\in L^1(\Omega)$.
    Then the weighted Bergman metric $g_{\Omega,\mu}$ is complete.
\end{prop}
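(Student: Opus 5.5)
The plan is to chain the three quoted results and then pass from the Carathéodory metric to the weighted Bergman metric through a comparison of induced distances. First, since $\Omega$ is uniformly squeezing, it is bounded by definition. Together with the hypothesis that $\mu\in L^1(\Omega)$ is admissible, Proposition \ref{prop: positivity of the weighted Bergman} applies. It gives $K_{\Omega,\mu}(p)>0$ for every $p\in\Omega$ and shows that $G_{\Omega,\mu}$ is positive-definite everywhere, so $g_{\Omega,\mu}$ is a genuine Kähler metric on all of $\Omega$. In particular the hypothesis $K_\mu(p)>0$ of Theorem \ref{thm: Hahn-Lu} holds at every point. Hence $C_\Omega(p;X)\le |X|_{g_{\Omega,\mu}(p)}$ for all $p\in\Omega$ and $X\in\mathbb{C}^n$.

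Next, I would integrate this pointwise inequality along curves. For a piecewise $C^1$ curve $\gamma$ in $\Omega$, its Carathéodory length is at most its $g_{\Omega,\mu}$-length. Taking infima over curves joining two points, the inner Carathéodory distance $c^i_\Omega$ is bounded above by the Riemannian distance $d_{g_{\Omega,\mu}}$. One detail needs checking: Theorem \ref{thm: DGZ} gives completeness of the Carathéodory metric, which I read as completeness of the integrated (inner) Carathéodory distance. If it were instead meant for the Carathéodory pseudodistance $c_\Omega$, the argument would still go through, because $c_\Omega\le c^i_\Omega$ always holds.

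Finally, I would transfer completeness. Let $\{z_k\}$ be a $d_{g_{\Omega,\mu}}$-Cauchy sequence. By the distance comparison it is also Cauchy for the Carathéodory distance, so by Theorem \ref{thm: DGZ} it converges to some $z\in\Omega$ in that distance. Equivalently, closed Carathéodory balls are compact. A closed $d_{g_{\Omega,\mu}}$-ball is contained in the closed Carathéodory ball of the same radius, and it is closed in $\Omega$ because $d_{g_{\Omega,\mu}}$ induces the manifold topology. It is therefore compact. By Hopf–Rinow, $g_{\Omega,\mu}$ is complete.

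The main obstacle is this last step: we must make sure the Carathéodory distance induces the usual topology on $\Omega$ and has compact closed balls. That is how \cite{deng2016properties} state completeness, and for bounded domains the Carathéodory distance is known to be continuous and to induce the Euclidean topology. So I would invoke those facts rather than reprove them.
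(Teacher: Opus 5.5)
Your proof is correct and follows the paper's own argument: Proposition \ref{prop: positivity of the weighted Bergman}, then Theorem \ref{thm: Hahn-Lu}, then Theorem \ref{thm: DGZ}, with the curve-integration and distance-comparison step (which the paper leaves implicit) written out. Your Cauchy-sequence argument is already enough, so you can drop the Hopf--Rinow alternative and its claimed equivalence between completeness and compactness of closed Carath\'eodory balls; that equivalence is not automatic, because $c_\Omega$ is not an inner distance.
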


\subsection{Invariant weighted Bergman metrics}

\begin{defn}\label{def: assignment}
Let $\mathcal{D}$ be a collection of domains in $\mathbb{C}^n$ and $\mathcal{M}$ be an assignment of an admissible weight function $\mu_{\Omega}:=\mathcal{M}(\Omega)$ to each domain $\Omega\in\mathcal{D}$.
We will call the weighted Bergman kernel $K_{\Omega,\mu_{\Omega}}$ {\it “$\mathcal{M}$-Bergman kernel"} of $\Omega$.\\
Suppose that for any domain $\Omega\in\mathcal{D}$, $g_{\Omega,\mu_{\Omega}}$ is positive-definite.
In this case, we will call the weighted Bergman metric $g_{\Omega,\mu_{\Omega}}$
{\it “$\mathcal{M}$-Bergman metric"} of $\Omega$.
\end{defn}

\begin{defn}\label{def: invariant assignment}
We say that the assignment $\mathcal{M}$ is {\it invariant} (under biholomorphisms) if for any biholomorphism $F$ of $\Omega\in\mathcal{D}$, it satisfies that
$$
\mu_{F(\Omega)}\circ F=|h_F|^{2}\mu_{\Omega}
$$
for some non-vanishing holomorphic function $h_F$ (depending on $F$) of $\Omega$.
We say that an invariant assignment $\mathcal{M}$ is {\it canonical} (of level $m\in\mathbb{N}^+$) if it satisfies that
$$
\mu_{F(\Omega)}\circ F=|\det J_{\mathbb{C}}F|^{2(m-1)}\mu_{\Omega}.
$$
\end{defn}
In \cite{yoo2025invariant}, the author proved that the following
\begin{thm}[{\cite[Theorem 3.7]{yoo2025invariant}}]\label{thm: invariance}
If an assignment $\mathcal{M}$ is invariant, the $\mathcal{M}$-Bergman metric is invariant under biholomorphisms, i.e.,
$$
g_{\Omega,\mu_{\Omega}}=F^*g_{F(\Omega),\mu_{F(\Omega)}}.
$$
If an invariant assignment $\mathcal{M}$ is canonical of level $m$, the $\mathcal{M}$-Bergman kernel satisfies the following transformation formula:
$$
K_{\Omega,\mu_{\Omega}}(z)
=
K_{F(\Omega),\mu_{F(\Omega)}}(F(z))|\det J_{\mathbb{C}}F(z)|^{2m}.
$$
In this case, we call $\tilde{g}_{\Omega,\mu_{\Omega}}:=\frac{1}{m}g_{\Omega,\mu_{\Omega}}$ the $\mathcal{M}$-“normalized" Bergman metric.
\end{thm}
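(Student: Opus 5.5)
The argument rests on the reproducing property \eqref{eqn: reproducing property} and the chain rule. We establish the kernel transformation formula first and then derive the invariance of the metric from it.

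Let $F:\Omega\to F(\Omega)$ be a biholomorphism, and write $\mu:=\mu_\Omega$ and $\nu:=\mu_{F(\Omega)}$, so that $\nu\circ F=|h_F|^2\mu$. We define
$$
T:\mathcal{A}^2(F(\Omega),\nu)\longrightarrow\mathcal{A}^2(\Omega,\mu),\qquad Tu:=(u\circ F)\, h_F\, \det J_{\mathbb{C}}F .
$$
By the change of variables $w=F(z)$, with $d\lambda(w)=|\det J_{\mathbb{C}}F|^2d\lambda(z)$, we get
$$
\|Tu\|^2_{\Omega,\mu}=\int_\Omega|u\circ F|^2|h_F|^2|\det J_{\mathbb{C}}F|^2 e^{-\varphi}\,d\lambda=\|u\|^2_{F(\Omega),\nu}.
$$
Here the weight is read as a density with respect to Lebesgue measure, which is the convention making $\nu\circ F=|h_F|^2\mu$ meaningful. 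Thus $T$ is an isometry. It is onto, because $u\mapsto (u\circ F^{-1})\,(h_F\det J_{\mathbb{C}}F)^{-1}\circ F^{-1}$ is an explicit inverse, and this inverse is holomorphic since $h_F$ and $\det J_{\mathbb{C}}F$ do not vanish. A unitary map sends an orthonormal basis $\{u^j\}$ to an orthonormal basis $\{Tu^j\}$. The series representation of the kernel is basis-independent, so
$$
K_{\Omega,\mu}(z,w)=\sum_j Tu^j(z)\overline{Tu^j(w)}=K_{F(\Omega),\nu}(F(z),F(w))\,\psi(z)\overline{\psi(w)},\qquad \psi:=h_F\det J_{\mathbb{C}}F .
$$
Alternatively, one can check the reproducing property of the right-hand side directly.

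On the diagonal this reads $\log K_{\Omega,\mu}(z)=\log K_{F(\Omega),\nu}(F(z))+\log|\psi(z)|^2$. Since $\psi$ is holomorphic and nonvanishing, $\log|\psi|^2$ is pluriharmonic, so $\partial\overline\partial\log|\psi|^2=0$. Applying $\partial\overline\partial$ and the chain rule for the holomorphic map $F$, we obtain
$$
\partial\overline\partial\log K_{\Omega,\mu}=F^*\partial\overline\partial\log K_{F(\Omega),\nu},
$$
which is exactly $g_{\Omega,\mu_\Omega}=F^*g_{F(\Omega),\mu_{F(\Omega)}}$.

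In the canonical case of level $m$ we may take $h_F=(\det J_{\mathbb{C}}F)^{m-1}$, which gives $\psi=(\det J_{\mathbb{C}}F)^m$. The diagonal identity then becomes the stated formula $K_{\Omega,\mu_\Omega}(z)=K_{F(\Omega),\mu_{F(\Omega)}}(F(z))|\det J_{\mathbb{C}}F(z)|^{2m}$.

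The step needing the most care is the unitarity of $T$. Two points have to be checked there: that the weight convention makes the change of variables produce exactly $|h_F|^2|\det J_{\mathbb{C}}F|^2$, and that admissibility transfers. The latter holds because compacts correspond under $F$ and $\psi$ is bounded above and below on compacts. Together these guarantee that the kernel on $F(\Omega)$ exists and that the basis expansion argument is legitimate. Everything else is formal.
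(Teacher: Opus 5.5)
Your argument is correct: $u\mapsto (u\circ F)\,h_F\det J_{\mathbb{C}}F$ is a unitary isomorphism $\mathcal{A}^2(F(\Omega),\mu_{F(\Omega)})\to\mathcal{A}^2(\Omega,\mu_{\Omega})$, which gives $K_{\Omega,\mu_\Omega}(z,w)=K_{F(\Omega),\mu_{F(\Omega)}}(F(z),F(w))\,\psi(z)\overline{\psi(w)}$ with $\psi=h_F\det J_{\mathbb{C}}F$. From this identity both conclusions follow: the pullback identity comes from pluriharmonicity of $\log|\psi|^2$ (positivity of the diagonal kernel transfers by the same formula), and the level-$m$ formula comes from taking $\psi=(\det J_{\mathbb{C}}F)^m$. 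The paper does not reprove this statement but quotes it from \cite{yoo2025invariant}, and your unitary-transfer argument is the standard proof of such transformation rules, so there is nothing to add.
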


Let $\mathcal{M}$ be a canonical invariant assignment $\mathcal{M}$ of level $m$.
Then, the volume form
$$
K_{\Omega,\mu}^{\frac{1}{m}}(z)d\lambda(z)
$$
is invariant under biholomorphisms by Theorem \ref{thm: invariance}.
Therefore, it is natural to consider the following normalizations:
$$
\tilde{G}_{\Omega,\mu}(z)
:=\left(\tilde{g}_{j\overline{k}}(z)\right)_{1\leq j,k\leq n},\ \ \ \ \ \ \ \ 
\tilde{g}_{j\overline{k}}(z)
:=
\partial_j\partial_{\overline{k}}\log K_{\Omega,\mu}^{\frac{1}{m}}(z).
$$
$$
\tilde{J}_{{\Omega,\mu_{\Omega}}}(z):=\frac{\det \tilde{G}_{\Omega,\mu_{\Omega}}(z)}{K_{\Omega,\mu_{\Omega}}^{1/m}(z)},
$$
Note that the normalized Bergman canonical function $\tilde{J}_{{\Omega,\mu_{\Omega}}}$ is invariant under biholomorphisms.

\subsection{Bergman metric with the K\"ahler-Einstein weight}
Let $\mathcal{D}^{\rm bp}$ be a collection of bounded pseudoconvex domains in $\mathbb{C}^n$.
By the famous work of Cheng-Yau\cite{cheng1980existence} and Mok-Yau\cite{mok1983completeness}, every $\Omega\in\mathcal{D}^{\rm bp}$ admits unique complete K\"{a}hler-Einstein metric $g^{\rm KE}_{\Omega}$ with Ricci curvature $-1$.
Define a sequence of admissible assignments $\mathcal{M}^{\rm KE}_m$ on $\mathcal{D}^{\rm bp}$ for $m\in\mathbb{N}_+$ by
$$
\mathcal{M}^{\rm KE}_m(\Omega)
:=
\mu^{\rm KE}_{\Omega,m}
:=
e^{-(m-1)\varphi^{\rm KE}_{\Omega}}
=
\frac{1}{{\det\left(g^{\rm KE}_{\Omega}\right)}^{(m-1)}},
$$
for $\Omega\in\mathcal{D}^{\rm bp}$. 
Denote the $\mathcal{M}^{\rm KE}_m$-Bergman kernel by $K^{\rm KE}_{\Omega,m}:=K_{\Omega,\mu^{\rm KE}_{\Omega,m}}$ and the $\mathcal{M}^{\rm KE}_m$-Bergman metric by $g^{\rm KE}_{\Omega,m}:=g_{\Omega,\mu^{\rm KE}_{\Omega,m}}$, and the bisectional curvature by $B^{\rm KE}_{\Omega,m}:=B_{\Omega,\mu^{\rm KE}_{\Omega,m}}$.

\begin{prop}[\cite{yoo2025invariant}]
The assignment $\mathcal{M}^{\rm KE}_m$ is invariant and canonical of level $m$.
Hence, the Bergman metric with the K\"ahler-Einstein weight $g^{\rm KE}_{\Omega,m}$ is invariant. 
\end{prop}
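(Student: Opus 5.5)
We need to prove that $\mathcal{M}^{\rm KE}_m$ is invariant and canonical of level $m$; invariance of $g^{\rm KE}_{\Omega,m}$ then follows from Theorem \ref{thm: invariance}. By Definition \ref{def: invariant assignment}, canonical of level $m$ means that for every biholomorphism $F:\Omega\to F(\Omega)$,
$$
\mu^{\rm KE}_{F(\Omega),m}\circ F=|\det J_{\mathbb{C}}F|^{2(m-1)}\mu^{\rm KE}_{\Omega,m}.
$$
This is a special case of invariance with $h_F=(\det J_{\mathbb{C}}F)^{m-1}$, which is holomorphic and non-vanishing. So it suffices to verify this identity, together with admissibility of the weight.

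The plan for the identity uses the uniqueness of the complete Kähler--Einstein metric with Ricci curvature $-1$ (Cheng--Yau, Mok--Yau). If $F:\Omega\to\Omega'=F(\Omega)$ is a biholomorphism of bounded pseudoconvex domains, then $F^*g^{\rm KE}_{\Omega'}$ is a complete Kähler--Einstein metric on $\Omega$ with Ricci curvature $-1$. Uniqueness gives $F^*g^{\rm KE}_{\Omega'}=g^{\rm KE}_{\Omega}$. In coordinates this reads $G^{\rm KE}_\Omega(z)=J_{\mathbb{C}}F(z)^{T}\,G^{\rm KE}_{\Omega'}(F(z))\,\overline{J_{\mathbb{C}}F(z)}$. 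Taking determinants,
$$
\det g^{\rm KE}_\Omega(z)=|\det J_{\mathbb{C}}F(z)|^2\det g^{\rm KE}_{\Omega'}(F(z)).
$$
Raising to the power $-(m-1)$ and using $\mu^{\rm KE}_{\Omega,m}=(\det g^{\rm KE}_\Omega)^{-(m-1)}$ yields exactly the required transformation law.

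The step I expect to be the main obstacle is admissibility: the weight must satisfy the local Cauchy estimate, and positivity of the metric should also be checked. For the Cauchy estimate, on any compact $A\subset\Omega$ the function $\det g^{\rm KE}_\Omega$ is smooth and positive, so $\mu^{\rm KE}_{\Omega,m}$ is bounded below by a positive constant on a compact neighborhood of $A$. The usual sub-mean-value inequality on small polydiscs then gives $\sup_A|u|\le C_A\|u\|_{\Omega,\mu}$.

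To apply Proposition \ref{prop: positivity of the weighted Bergman} we also want $\mu^{\rm KE}_{\Omega,m}\in L^1(\Omega)$. For this I would use the Cheng--Yau/Mok--Yau construction, in which $\varphi^{\rm KE}$ is obtained from a bounded perturbation of $-\log(-\rho)$-type potentials. Alternatively, I would compare with the Bergman metric on bounded domains, giving $\det g^{\rm KE}_\Omega\gtrsim$ a positive constant, so the weight is bounded. If this route is delicate, I would simply invoke \cite{yoo2025invariant}, where these properties are established, and present the determinant computation above as the core of the argument.
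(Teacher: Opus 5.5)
Your proposal is correct. The paper gives no argument for this proposition beyond citing \cite{yoo2025invariant}, and your route is the standard proof of that fact: uniqueness of the complete K\"ahler--Einstein metric gives $F^*g^{\rm KE}_{F(\Omega)}=g^{\rm KE}_{\Omega}$, taking determinants yields $\mu^{\rm KE}_{F(\Omega),m}\circ F=|\det J_{\mathbb{C}}F|^{2(m-1)}\mu^{\rm KE}_{\Omega,m}$ with $h_F=(\det J_{\mathbb{C}}F)^{m-1}$, and Theorem \ref{thm: invariance} then gives invariance of the metric.

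On the side issue of $\mu^{\rm KE}_{\Omega,m}\in L^1(\Omega)$, drop the $-\log(-\rho)$ route, which is only available for sufficiently smooth strongly pseudoconvex domains. Instead, use the volume-form Schwarz lemma against a ball of radius $R$ containing $\Omega$, as in the proof of Theorem \ref{thm: quantative estimates}. This gives $\det g^{\rm KE}_{\Omega}\geq((n+1)/R^2)^n$ and hence a bounded weight.
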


Consider the sequence of $\mathcal{M}^{\rm KE}_m$-\emph{normalized} Bergman kernels, metrics, and curvatures:
$$
\widetilde{K}^{\rm KE}_{\Omega,m}:=\sqrt[m]{K^{\rm KE}_{\Omega,m}},\ \ \ \ 
\widetilde{g}^{\rm KE}_{\Omega,m}
:=\frac{1}{m}g^{\rm KE}_{\Omega,m},\ \ \ {\rm\ and\ } \ \ \
\widetilde{B}^{\rm KE}_{\Omega,m}
:=mB^{\rm KE}_{\Omega,m}.
$$

\begin{thm}[cf. \cite{tian1990,yoo2025invariant}]\label{thm: tian ke seq}
Suppose that $\Omega$ is an uniformly squeezing domain.
Then, we have the following uniform convergences:
$$
    \widetilde{K}^{\rm KE}_{\Omega,m}
    \rightarrow \det\left(g^{\rm KE}_{\Omega}\right),\ \ \ 
    \widetilde{g}^{\rm KE}_{\Omega,m}
    \rightarrow g^{\rm KE}_{\Omega},\ \ \ 
    \widetilde{B}^{\rm KE}_{\Omega,m}
    \rightarrow B^{\rm KE}_{\Omega}:=B_{g^{\rm KE}_{\Omega}},
$$
as $m\rightarrow\infty$.
\end{thm}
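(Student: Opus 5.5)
The statement is cited to \cite{tian1990,yoo2025invariant}, and I would follow the Tian-type peak section argument, localized by the squeezing function. The first step is to fix $p\in\Omega$ and use the invariance from Theorem \ref{thm: invariance}. Both the normalized kernel $\widetilde{K}^{\rm KE}_{\Omega,m}\,d\lambda$ and $\det(g^{\rm KE}_\Omega)\,d\lambda$ are invariant volume forms. The metrics $\widetilde g^{\rm KE}_{\Omega,m}$ and $g^{\rm KE}_\Omega$ are invariant, and so are their curvatures. So it suffices to compare the quantities at $0$ after applying the Deng--Guan--Zhang embedding $f_p$, with $\mathbb{B}^n_{s}\subset f_p(\Omega)\subset\mathbb{B}^n_1$ and $s\ge s_0>0$ uniformly. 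All estimates will then be carried out on domains $D=f_p(\Omega)$ in this uniformly controlled family, and uniformity in $p$ comes for free.

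Next I need a priori control of the K\"ahler--Einstein potential on this family. By Cheng--Yau/Mok--Yau together with the Schwarz--Yau lemma applied between $\mathbb{B}^n_s\subset D\subset\mathbb{B}^n_1$, the metric $g^{\rm KE}_D$ is uniformly equivalent to the Euclidean metric on $\mathbb{B}^n_{s_0/2}$. Standard Evans--Krylov/Schauder interior estimates for the complex Monge--Amp\`ere equation $\det(\varphi_{j\bar k})=e^{(n+1)\varphi}$ then give uniform $C^k$ bounds on $\varphi^{\rm KE}_D$ near $0$. With these bounds I would run the extremal-function characterization of the weighted kernel: $K_{D,\mu}(0)=\sup\{|u(0)|^2:\|u\|_{D,\mu}\le1\}$, and similarly for the first and second order minimum integrals that determine the metric and curvature.

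The core is a two-sided asymptotic estimate with weight $e^{-(m-1)\varphi}$, the Tian--Yau--Zelditch/peak section step. For the lower bound, I choose K\"ahler normal coordinates for $\varphi$ at $0$ so that $\varphi=\varphi(0)+|z|^2+O(|z|^3)$ up to pluriharmonic terms, which can be absorbed into $|h|^2$. I then build a peak function $u=\chi\cdot$(polynomial) cut off at scale $m^{-1/2}\log m$. Since $\Omega$ is pseudoconvex and $(m-1)\partial\bar\partial\varphi$ together with the Ricci term gives positivity of order $m$, H\"ormander's $L^2$ estimate corrects $\bar\partial u$ with an error $O(m^{-1/2})$ relative to the main term. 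For the upper bound, I use a sub-mean-value estimate for $|u|^2e^{-(m-1)\varphi}$ on the ball of radius $m^{-1/2}$. Together these give
\[
K_{D,m}(0)=\frac{m^n}{\pi^n}\,e^{(m-1)\varphi(0)}\det(\varphi_{j\bar k}(0))\bigl(1+O(m^{-1})\bigr).
\]
The same construction applied to derivatives, where $\varphi_{j\bar k}$ carries the Einstein factor $e^{(n+1)\varphi}$ that produces the $m$-th root, gives $\widetilde{K}_m\to\det g^{\rm KE}$. It also gives the expansion of $\log K_m$ to second and fourth order, yielding $\frac1m\partial\bar\partial\log K_m=\partial\bar\partial\varphi+O(m^{-1})$ and the curvature convergence, with all constants depending only on $s_0$ and the $C^k$ bounds.

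The main obstacle is making the H\"ormander correction and the localization uniform. The domain $D$ varies with $p$, and the weight is only known to be controlled near $0$, not globally. To handle this I would use $\varphi$ itself, which is plurisubharmonic with $\partial\bar\partial\varphi\ge$ a multiple of $g^{\rm KE}$, as the global weight in H\"ormander's estimate, so no global bound on $D$ is needed. The cutoff errors then only involve the local $C^k$ bounds. The curvature convergence is the most delicate part, since it needs the expansion to order $m^{-1}$ in fourth derivatives. I would obtain it by differentiating the peak-section asymptotics, or alternatively via the minimum-integral formula of Theorem \ref{thm: polarized bergman-fuks} applied to the extremal problems.
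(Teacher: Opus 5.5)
The paper gives no proof of this statement. It imports it from \cite{tian1990} and \cite{yoo2025invariant}, i.e.\ Tian's peak-section argument made uniform on uniformly squeezing domains, and your outline follows that route.

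\textbf{What works.} The reduction is sound. The ratio $\widetilde K^{\rm KE}_{\Omega,m}/\det(g^{\rm KE}_\Omega)$, the metrics and the curvatures are biholomorphic invariants, so you may work at $0\in D=f_p(\Omega)$ with $\mathbb{B}^n_{s_0}\subset D\subset\mathbb{B}^n_1$.
\begin{itemize}
\item The lower bound $g^{\rm KE}_D\ge c\,g_{\rm Eucl}$ on all of $D$ comes from Yau's Schwarz lemma for $D\hookrightarrow\mathbb{B}^n_1$.
\item The upper bound on $\mathbb{B}^n_{s_0/2}$ must come from the volume-form Schwarz lemma for $\mathbb{B}^n_{s_0}\hookrightarrow D$, combined with the lower bound. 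The metric version would need a curvature bound on the target $D$.
\item Evans--Krylov and bootstrapping then give uniform $C^k$ bounds.
\end{itemize}
This suffices for $\widetilde K^{\rm KE}_{\Omega,m}$, since any factor $e^{o(m)}$ disappears under the $m$-th root. It also suffices for $\widetilde g^{\rm KE}_{\Omega,m}$ via $|X|^2_{g}=I^0/I^1$, which needs each minimum integral only up to a factor $1+o(1)$.

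Two corrections. First, with $\varphi=\log\det g^{\rm KE}$ and ${\rm Ric}=-1$ the equation is $\det(\varphi_{j\bar k})=e^{\varphi}$; with your $e^{(n+1)\varphi}$ the $m$-th root would tend to $(\det g^{\rm KE})^{1/(n+1)}$. Second, the sharp upper bound needs the sub-mean-value comparison with the Gaussian $e^{-(m-1)|z|^2}$ on a ball of radius $\gg m^{-1/2}$, e.g.\ $m^{-1/2}\log m$, not $m^{-1/2}$.

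\textbf{The gap: curvature.} We have $\widetilde B^{\rm KE}_{\Omega,m}=mB^{\rm KE}_{\Omega,m}$, and by Theorem \ref{thm: polarized bergman-fuks}, $B=2-S-T$. For the flat Gaussian weight, $S=1-c_{XY}$ and $T=1+c_{XY}$, where $c_{XY}=|\langle X,Y\rangle|^2/(|X|^2|Y|^2)$, so these cancel exactly. Hence all curvature information sits at relative order $1/m$; compare Proposition \ref{prop: tensors of the unit ball}, where it is just the factor $1+\frac{1}{m(n+1)}$ in $T$. You therefore need $I^0$, $I^1(X)$, $I^1(X|Y)$ and $I^2(X,Y)$ to relative accuracy $o(1/m)$, with the $1/m$ terms computed exactly.

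Your ingredients give only $O(m^{-1/2})$. A remainder $O(|z|^3)$ in $\varphi$ contributes $(m-1)|z|^3\sim m^{-1/2}$ at the scale $|z|\sim m^{-1/2}$, and you allow an $O(m^{-1/2})$ H\"ormander error. After multiplying by $m$, nothing is left. Differentiating the peak-section asymptotics is not available either: you control values at the centre only, in point-dependent coordinates, not a $C^4$ expansion.

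The missing step is the second-order part of Tian's argument.
\begin{itemize}
\item Use Bochner coordinates, in which $\varphi=\varphi(0)+2\,\mathrm{Re}\,h+|z|^2-\frac14\sum R_{i\bar jk\bar l}(0)z_i\bar z_jz_k\bar z_l+O(|z|^5)$. Here $h$ is a holomorphic polynomial with $h(0)=0$, and the constants are controlled by the uniform $C^5$ bounds. Absorbing the unit $e^{-(m-1)h}$ and the Jacobian into test functions preserves every jet constraint in Definition \ref{def: minimum integrals}.
\item Cut off at radius $m^{-1/2}\log m$ and solve $\bar\partial$ with the psh weight $(m-1)\varphi+2(n+2)\log|z|$. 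Since $D\subset\mathbb{B}^n_1$ gives $|z|^{-2(n+2)}\ge1$, localization errors are $O(m^{-\infty})$ and the $2$-jet at $0$ is preserved.
\item Treat each extremal problem as a first-order perturbation of the Gaussian one. The shift of the minimizer costs only $O(m^{-2})$ up to logarithms.
\end{itemize}
The resulting $1/m$ corrections give $mB^{\rm KE}_{\Omega,m}=B^{\rm KE}_\Omega+O(m^{-1/2})$ up to logarithms, uniformly in $p$. This is Tian's $C^2$ convergence. Without it, your outline establishes only the first two limits.
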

\begin{rmk}
    Since $\mu^{\rm KE}_{\Omega,m}\in C^{\infty}(\Omega)$, Proposition \ref{prop: completness of the weightd Bergman metric} implies that the (normalized) weighted Bergman metrics $\widetilde{g}^{\rm KE}_{\Omega,m}$ are {\it complete} if $\Omega$ is an uniformly squeezing domain.
\end{rmk}


\section{Bergman's minimum integrals and Bergman-Fuks formula}
Suppose that $\Omega$ is a bounded domain with an admissible weight $\mu\in L^1(\Omega)$.

\begin{defn}\label{def: minimum integrals}
Fix a point $p\in\Omega$ and non-zero vectors $X,Y\in\mathbb{C}^n$.
\emph{Minimum integrals} of the weighted Bergman kernels are defined by
\begin{align*}
I^0_{\Omega,\mu}(p)&:=\inf\left\{\|u\|_{\Omega,\mu}^2 : u\in\mathcal{A}^2_\mu(\Omega),\ u(p)=1\right\},\\
I^1_{\Omega,\mu}(p;X)&:=\inf\left\{\|u\|_{\Omega,\mu}^2 : u\in\mathcal{A}^2_\mu(\Omega),\ u(p)=0,\ \partial_Xu(p)=1 \right\},\\
I^1_{\Omega,\mu}(p;X|Y)&:=\inf\left\{\|u\|_{\Omega,\mu}^2 : u\in\mathcal{A}^2_\mu(\Omega),\ u(p)=0,\ \partial_Yu(p)=0,\ \partial_Xu(p)=1 \right\},\\
I^1_{\Omega,\mu}(p; \partial_k|_{<k})&:=\inf\left\{\|u\|_{\Omega,\mu}^2 : u\in\mathcal{A}^2_\mu(\Omega),\ u(p)=0,\ \partial_ju(p)=0,\  \partial_ku(p)=1,\ 1\leq \forall j<k \right\},\\
I^2_{\Omega,\mu}(p;X,Y)&:=\inf\left\{\|u\|_{\Omega,\mu}^2 : u\in\mathcal{A}^2_\mu(\Omega),\ u(p)=0,\ du(p)=0,\ \partial_X\partial_Yu(p)=1 \right\},
\end{align*}
where $\partial_X$ is the directional derivative along $X$.
\end{defn}

By the definitions, we obtain the following monotonicity property.

\begin{lem}\label{lem: monotone}
    If $p\in\Omega_1\subset\Omega_2$ and $\mu_1\leq\mu_2$ on $\Omega_1$, then for $j=0,1,2$,
    $$
    I^j_{\Omega_1,\mu_1}(p;\cdot)\leq I^j_{\Omega_2,\mu_2}(p;\cdot).
    $$
\end{lem}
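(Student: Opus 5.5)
The monotonicity follows directly from comparing the admissible classes in Definition \ref{def: minimum integrals} via restriction. Fix $p\in\Omega_1\subset\Omega_2$ and one of the minimum integrals $I^j$ with its prescribed vectors ($X$, or $X,Y$, or the coordinate constraints). Let $u\in\mathcal{A}^2(\Omega_2,\mu_2)$ be any competitor for $I^j_{\Omega_2,\mu_2}(p;\cdot)$. That is, $u$ is holomorphic on $\Omega_2$, has finite $\mu_2$-norm, and satisfies the finitely many linear jet conditions at $p$ (values of $u$, $du$, $\partial_Xu$, $\partial_X\partial_Yu$ at $p$).

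Consider the restriction $v:=u|_{\Omega_1}$.
\begin{itemize}
\item $v$ is holomorphic on $\Omega_1$.
\item All the jet conditions only involve derivatives of $u$ at the interior point $p\in\Omega_1$, so $v$ satisfies the same conditions.
\item Since $\Omega_1\subset\Omega_2$ and $\mu_1\leq\mu_2$ on $\Omega_1$ (as measures, i.e. $e^{-\varphi_1}\le e^{-\varphi_2}$ pointwise),
$$
\|v\|^2_{\Omega_1,\mu_1}=\int_{\Omega_1}|u|^2\,d\mu_1\leq\int_{\Omega_1}|u|^2\,d\mu_2\leq\int_{\Omega_2}|u|^2\,d\mu_2=\|u\|^2_{\Omega_2,\mu_2}<\infty.
$$
\end{itemize}
Hence $v\in\mathcal{A}^2(\Omega_1,\mu_1)$ is a competitor for $I^j_{\Omega_1,\mu_1}(p;\cdot)$, and $I^j_{\Omega_1,\mu_1}(p;\cdot)\leq\|u\|^2_{\Omega_2,\mu_2}$. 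Taking the infimum over all such $u$ gives the claim.

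If the competitor class on $\Omega_2$ is empty, then its infimum is $+\infty$ and the inequality is trivial. Under the standing assumption of bounded $\Omega_2$ and $\mu_2\in L^1$, however, polynomials are always competitors, so both infima are finite.

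There is no real obstacle here. The only points needing care are that the constraints are local at $p\in\Omega_1$, and that $\mu_1\le\mu_2$ is read as an inequality of densities, so the restriction preserves square-integrability.
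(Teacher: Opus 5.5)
Your restriction argument is correct and is exactly what the paper means by ``by the definitions'': restricting any competitor on $\Omega_2$ to $\Omega_1$ preserves the jet conditions at $p$ and does not increase the weighted norm. One minor point: your aside that both infima are always finite fails for $I^1_{\Omega,\mu}(p;X|Y)$ when $X,Y$ are linearly dependent, since the competitor class is then empty, but your convention $\inf\emptyset=+\infty$ already handles that case.
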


As in the case of classic Bergman kernels, the weighted Bergman kernel, length, holomorphic curvature, and the canonical functions can be represented by minimum integrals as follows.

\begin{thm} [Bergman-Fuks formula]\label{thm: bergman-fuks}
$$
 K_{\Omega,\mu}(p)=\frac{1}{I^0_{\Omega,\mu}(p)},\ \ \ \ \ 
 |X|_{g_{\Omega,\mu}(p)}^2=\frac{I^0_{\Omega,\mu}(p)
 }{I^1_{\Omega,\mu}(p;X)},
$$
$$
 H_{g_{\Omega,\mu}}(p;X)=2-\frac{\left(I^1_{\Omega,\mu}(p;X)\right)^2}{I^2_{\Omega,\mu}(p;X,X)I^0_{\Omega,\mu}(p)},\ \ \ \ \
 J_{\Omega,\mu}(p)=\frac{\left(I^0_{\Omega,\mu}(p)\right)^{n+1}}{\prod\limits_{k=1}^nI^1_{\Omega,\mu}(p; \partial_k|_{<k} )}.
$$
\end{thm}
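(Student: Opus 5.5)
The approach is to turn each minimum integral into a squared distance in the Hilbert space $\mathcal{A}^2(\Omega,\mu)$, and then to read off kernel derivatives as Gram determinants.

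Fix $p$ and write $\mathcal{H}=\mathcal{A}^2(\Omega,\mu)$. By admissibility and Cauchy estimates, the functionals $u\mapsto u(p)$, $u\mapsto\partial_ju(p)$ and $u\mapsto\partial_j\partial_ku(p)$ are bounded. Let $e_0,\ e_j,\ e_{jk}\in\mathcal{H}$ be their Riesz representers. Differentiating \eqref{eqn: reproducing property} gives
$$
e_0=K^p_{\Omega,\mu},\qquad e_j=\overline{\partial_{j}}\big|_{w=p}K_{\Omega,\mu}(\cdot,w),
$$
and similarly for $e_{jk}$. The inner products among these representers are then derivatives of $K$ on the diagonal, for example $\langle e_j,e_k\rangle=\partial_k\partial_{\bar j}K(p)$.

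The basic tool is the following elementary fact. Let $V$ be a finite-dimensional subspace and $v\notin V$. Then
$$
\inf\{\|u\|^2: u\perp V,\ \langle u,v\rangle=1\}=\frac{1}{\|v-P_Vv\|^2},
$$
with minimizer $(v-P_Vv)/\|v-P_Vv\|^2$, where $P_V$ is the orthogonal projection onto $V$. In Gram-determinant form, $\|v-P_Vv\|^2=\det\mathrm{Gram}(V,v)/\det\mathrm{Gram}(V)$.

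Apply this case by case.
\begin{itemize}
\item For $I^0$, take $V=0$ and $v=e_0$. This gives $I^0=1/K(p)$.
\item For $I^1(p;X)$, take $V=\mathrm{span}\{e_0\}$ and $v=e_X:=\sum\overline{X_j}e_j$. The Gram ratio is $\partial_X\partial_{\bar X}K-|\partial_XK|^2/K=K\,\partial_X\partial_{\bar X}\log K$. Hence $I^0/I^1=|X|^2_{g}$.
\item For $I^1(p;\partial_k|_{<k})$, take $V=\mathrm{span}\{e_0,e_1,\dots,e_{k-1}\}$ and $v=e_k$. The product over $k$ of the Gram ratios telescopes to
$$
\frac{\det\mathrm{Gram}(e_0,\dots,e_n)}{K}.
$$
Factoring out $K$ by a Schur complement gives $\det\mathrm{Gram}(e_0,\dots,e_n)=K^{n+1}\det G$. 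Therefore
$$
\prod_k I^1(p;\partial_k|_{<k})=\frac{1}{K^n\det G}=\frac{(I^0)^{n+1}}{J},
$$
which is the formula for $J_{\Omega,\mu}(p)$.
\end{itemize}

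For $H$, take $V=\mathrm{span}\{e_0,e_1,\dots,e_n\}$ and $v=e_{XX}$. Then $1/I^2(p;X,X)=\|e_{XX}-P_Ve_{XX}\|^2$, which equals the $2$-jet Schur complement
$$
\partial_X^2\partial_{\bar X}^2K-\big(\text{projection terms}\big).
$$
I would compute this in coordinates adapted at $p$. First replace $K$ locally by $K(z,w)/\big(K(z,p)K(p,w)/K(p)\big)$-type normalizations, or equivalently choose a unitary change of coordinates and subtract the projection explicitly. The aim is to express $\|e_{XX}-P_Ve_{XX}\|^2$ through derivatives of $\log K$. Using $\partial\bar\partial\log K=g$ and the curvature formula $R=-\partial\bar\partial g+g^{-1}\partial g\,\bar\partial g$, the target identity is
$$
\|e_{XX}-P_Ve_{XX}\|^2=K\big(2|X|_g^4-R(X,\bar X,X,\bar X)\big).
$$
Combining this with $I^1(p;X)^2/I^0=K^{-1}|X|_g^{-4}$ gives
$$
\frac{(I^1)^2}{I^2\,I^0}=2-H(X),
$$
which is the claimed formula.

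The main obstacle is exactly this last step: the algebra expanding the Schur complement $\partial_X^2\partial_{\bar X}^2K-\cdots$ into $\log K$ derivatives, where the factor $2$ must emerge. To tame it, I would first normalize the kernel at $p$. Multiplying $K(z,w)$ by $h(z)\overline{h(w)}$ with $h$ holomorphic and nonvanishing does not change $g$, $R$, or the ratios of minimum integrals, since $u\mapsto hu$ is an isometry between the corresponding weighted spaces. Using this freedom, I would arrange $\log K(z,\bar z)=|z|^2+O(|z|^4)$-type normal coordinates, with no pure $z^\alpha$ or $\bar z^\beta$ terms up to order $3$. In such coordinates $e_0,e_j,e_{jk}$ become mutually orthogonal at the relevant orders. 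The Schur complement then reduces to a single fourth-order coefficient, namely $2\big|X\big|^4$ from expanding $e^{|z|^2}$ minus the quartic coefficient of $\log K$, which is exactly $R(X,\bar X,X,\bar X)$.
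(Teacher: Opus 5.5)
For $K_{\Omega,\mu}(p)$, $|X|^2_{g_{\Omega,\mu}(p)}$ and $J_{\Omega,\mu}(p)$ your argument is the paper's. You use Riesz representers of the jet functionals and write $1/I$ as a squared distance to the span of the lower-order representers. You then telescope the product using the Schur complement $\langle\tilde h_i,\tilde h_j\rangle=Kg_{i\bar j}$; the paper phrases this with the projected Gram matrices $\tilde G_k$, which is the same thing.

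For $H$ you take a genuinely different route. The paper proves this inside its unified proof of the polarized formula. It expands $\|\tilde P_dP_0h_{XY}\|^2$ in arbitrary coordinates as $K_{X\bar XY\bar Y}-K_{XY}K_{\bar X\bar Y}/K-K^{-3}\sum g^{\bar ji}(\cdots)(\cdots)$. It matches this term by term with Kobayashi's identity expressing $R_{i\bar jk\bar l}$ through derivatives of $K$, and finally sets $Y=X$.

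You instead prove the needed identity by normalizing. In coordinates centered at $p$, write $K(z,\bar w)=K(p)e^{\psi}$ with $\psi=\langle z,w\rangle+O(4)$ and no pure terms. Then the Taylor coefficients of $K$ of bidegree $(1,0)$, $(2,0)$, $(2,1)$ and their conjugates vanish, so $P_Ve_{XX}=0$. Since $\partial g(p)=0$, this gives $\|e_{XX}\|^2=K(p)\,\partial_X^2\partial_{\bar X}^2e^{\psi}(p)=K(p)\big(2|X|^4+\partial_X^2\partial_{\bar X}^2\psi(p)\big)=K(p)\big(2|X|^4-R(X,\bar X,X,\bar X)\big)$. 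I checked that this goes through. One wording slip: the quartic coefficient of $\log K$ is $-R(X,\bar X,X,\bar X)$, not $R$ as your last sentence says, though your final target identity is correct.

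What each route buys:
\begin{itemize}
\item Yours is self-contained. It shows the constant $2$ comes from the $\tfrac12\langle z,w\rangle^2$ term of the exponential, and it gives the bisectional case verbatim.
\item The paper's works in arbitrary coordinates with no normalization, but outsources the algebra to Kobayashi's formula.
\end{itemize}

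One point you should make explicit. Your normalizations are only local: the gauge factor built from $K(z,p)$, and especially the quadratic coordinate change needed to kill the $(2,1)$ terms of $\log K$. So the global isometry $u\mapsto hu$ you invoke does not literally apply. This is harmless because everything depends only on jets at $p$. The new Gram matrix is the Gram matrix in $\mathcal{A}^2(\Omega,\mu)$ of the functionals $u\mapsto\partial^\alpha(hu)(p)$, resp. $u\mapsto\partial^\alpha(u\circ\phi)(0)$. By the Leibniz and chain rules these are triangular combinations of the old ones. Hence $V$ is preserved, and the representer of the second-order functional changes, modulo $V$, only by the scalar $h(p)$, resp. by $X\mapsto d\phi_0(X)$. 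For the gauge alone you could instead take $h=e^{q}$ with $q$ a polynomial, which is global and bounded above and below on the bounded domain $\Omega$.
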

The following theorem provides the minimum integral representation for the weighted Bergman metric (as a hermitian inner product) and the bisectional curvature. 
By employing a direct analysis of extremal functions — rather than a formal polarization — we clarify and generalize the identities previously recorded in the literature {\cite[Theorem 2.3]{kim2003contemporary}} by including the $S_g$-tensor terms omitted in previous formulations.

\begin{thm}\label{thm: polarized bergman-fuks}
Let
$
S_{g_{\Omega,\mu}}(p;X,Y):=1-\frac{|\langle X,Y\rangle_{ g_{\Omega,\mu}(p)}|^2}{|X|_{g_{\Omega,\mu}(p)}^2|Y|_{g_{\Omega,\mu}(p)}^2}.
$
Then the bisectional curvature
$$
 B_{g_{\Omega,\mu}}(p;X,Y)=2-S_{g_{\Omega,\mu}}(p;X,Y)-\frac{I^1_{\Omega,\mu}(p;X)I^1_{\Omega,\mu}(p;Y)}{I^2_{\Omega,\mu}(p;X,Y)I^0_{\Omega,\mu}(p)},
$$
and
$$
S_{g_{\Omega,\mu}}(p;X,Y)=
\left\{ \begin{array}{lcl}
\frac{I^1_{\Omega,\mu}(p;X)}{I^1_{\Omega,\mu}(p;X|Y)}=\frac{I^1_{\Omega,\mu}(p;Y)}{I^1_{\Omega,\mu}(p;Y|X)} & \mbox{for}
& X\neq Y, \medskip\\ 0 & \mbox{for} & X=Y.
\end{array}\right.
$$
\end{thm}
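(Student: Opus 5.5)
The plan is to compute everything at the fixed point $p$ in an orthonormal basis of $\mathcal{A}^2(\Omega,\mu)$ adapted to the jet filtration at $p$, so that each minimum integral becomes the reciprocal of a squared norm of an orthogonal projection. For a family of linear functionals $\ell_1,\dots,\ell_r$ on $\mathcal{A}^2(\Omega,\mu)$ with Riesz representatives $v_1,\dots,v_r$, the standard Hilbert space fact is
$$
\inf\{\|u\|^2:\ \ell_1(u)=\dots=\ell_{r-1}(u)=0,\ \ell_r(u)=1\}=\|P v_r\|^{-2},
$$
where $P$ is the orthogonal projection onto $\{v_1,\dots,v_{r-1}\}^\perp$. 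The extremal function is $Pv_r/\|Pv_r\|^2$. I will apply this with the functionals $u\mapsto u(p)$, $u\mapsto\partial_j u(p)$, and $u\mapsto\partial_X\partial_Y u(p)$. The corresponding representatives are $K^p$, $\overline{\partial}_{\bar j}K(\cdot,p)$, and so on, obtained by differentiating the reproducing identity \eqref{eqn: reproducing property}; admissibility justifies this. Concretely, I pick an orthonormal basis $\{u^0,u^1,\dots,u^n,u^{n+1},\dots\}$ with the following properties. First, $u^0(p)\neq0$ and $u^j(p)=0$ for $j\ge1$. Second, $u^1,\dots,u^n$ have linearly independent differentials at $p$, and $du^j(p)=0$ for $j>n$. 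By a linear change of coordinates I may assume $\partial_k u^j(p)=c\,\delta_{jk}$ for some $c>0$, so that $g_{j\bar k}(p)$ is a multiple of $\delta_{jk}$. Third, the second derivatives of $u^j$, $j>n$, carry the $2$-jets. The metric and curvature at $p$ can then be expanded from $K=\sum_j|u^j|^2$, recovering Theorem~\ref{thm: bergman-fuks} as a check.

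The key steps are as follows. First, in this basis, $|X|^2_g=\sum_{j=1}^n|\partial_Xu^j(p)|^2/|u^0(p)|^2$ and $\langle X,Y\rangle_g=\sum_{j=1}^n\partial_Xu^j(p)\overline{\partial_Yu^j(p)}/|u^0(p)|^2$. Second, $I^0=|u^0(p)|^{-2}$ and $I^1(p;X)=\bigl(\sum_{j\le n}|\partial_Xu^j(p)|^2\bigr)^{-1}$, which gives the metric formula. Third, I compute $I^1(p;X|Y)$ as the reciprocal of $|a_X|^2-|\langle a_X,a_Y\rangle|^2/|a_Y|^2$, where $a_X:=(\partial_Xu^j(p))_{j\le n}\in\mathbb{C}^n$. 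This is a Gram–Schmidt step, and it yields $I^1(p;X)/I^1(p;X|Y)=1-|\langle X,Y\rangle_g|^2/(|X|^2_g|Y|^2_g)=S_g$. The formula is symmetric in $X,Y$, which gives the second equality. When $X$ and $Y$ are parallel (the case $X=Y$), the constraint set for $I^1(p;X|Y)$ is empty, so that case is handled by the convention $S=0$.

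Next comes the curvature. I will write $R_{i\bar jk\bar l}$ for $\log K$ in this basis. The standard computation at $p$ gives
$$
R(X,\bar X,Y,\bar Y)|X|^{-2}|Y|^{-2}\cdot(\ldots)=2\text{-type term}-\text{(second-jet term)}.
$$
Precisely, $\partial\bar\partial$ of $\log K$ involves fourth-order terms. These split into a piece from $u^1,\dots,u^n$, which produces $|X|^2|Y|^2+|\langle X,Y\rangle|^2$, that is, $2-S$ after normalization. The other piece comes from $u^{j}$, $j>n$, and produces $\sum_{j>n}|\partial_X\partial_Yu^j(p)|^2/|u^0(p)|^2$. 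On the other side, $I^2(p;X,Y)=\bigl(\sum_{j>n}|\partial_X\partial_Yu^j(p)|^2\bigr)^{-1}$, again by the projection principle, since the constraints $u(p)=0$ and $du(p)=0$ kill $u^0,\dots,u^n$. Dividing by $|X|^2|Y|^2=(I^0)^2/(I^1(X)I^1(Y))$ then produces exactly the term $I^1(X)I^1(Y)/(I^2I^0)$ with the sign shown.

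The main obstacle is the fourth-order Taylor bookkeeping in the curvature. The orthonormal basis can be normalized only up to first order. The second derivatives of $u^0$ and of $u^1,\dots,u^n$ at $p$ need not vanish, and they contribute cross terms. To handle this I would choose the basis so that the span of $\{u^j\}_{j>n}$ is exactly the subspace $\{u(p)=0,\ du(p)=0\}$ and its orthogonal complement is $\mathrm{span}\{u^0,\dots,u^n\}$. I would then show, as in the classical unweighted Bergman–Fuks computation, that the lower-order contributions assemble precisely into $-\partial\bar\partial$ of the metric minus the $g^{-1}\partial g\,\bar\partial g$ term. In other words, the $\Gamma$-term in $R_{i\bar jk\bar l}$ exactly cancels the second-derivative contributions of $u^0,\dots,u^n$. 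If this direct cancellation becomes messy, I would instead replace the basis by polynomial-normalized local coordinates, choosing $z$ so that $u^0/u^0(p)=1+O(|z|^3)$ and $u^j/u^0=cz_j+O(|z|^2)$ projected appropriately. Only the weighted norms enter the argument, so the computation is identical to the unweighted case.
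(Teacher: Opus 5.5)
Your route is correct. For the minimum integrals it is the paper's projection argument written in coordinates. In your adapted orthonormal basis the Riesz representer of $\partial_X$ at $p$ is $h_X=\sum_j\overline{\partial_Xu^j(p)}\,u^j$. Hence $P_0h_X=\sum_{j\le n}\overline{a_X^j}\,u^j$ and $\tilde P_d\tilde h_{XY}=\sum_{j>n}\overline{\partial_X\partial_Yu^j(p)}\,u^j$. Your Gram--Schmidt step in $\mathbb{C}^n$ is exactly the paper's projection of $\tilde h_X$ onto $\tilde h_Y^{\perp}$ inside $H_0$, and your formula for $I^2$ is $\|\tilde P_d\tilde h_{XY}\|^{-2}$. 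Your handling of the degenerate case is slightly more accurate than the statement: the constraint set for $I^1(p;X|Y)$ is empty whenever $Y$ is a multiple of $X$, not only when $X=Y$.

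The real difference is the curvature step. The paper never expands $\log K$. It imports Kobayashi's formula for $R_{i\bar jk\bar l}$ in terms of $K$ and its derivatives, and recognizes the last two terms as $K^{-1}\|\tilde P_d\tilde h_{XY}\|^2$. You instead propose to prove directly the identity $R(X,\bar X,Y,\bar Y)=|X|_g^2|Y|_g^2+|\langle X,Y\rangle_g|^2-\sum_{j>n}|\partial_X\partial_Yu^j(p)|^2/|u^0(p)|^2$. That would make the argument self-contained, since it re-derives Kobayashi's identity. But it is precisely the part your proposal asserts rather than carries out.

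The computation does go through, but not by your fallback. The normalization $u^0/u^0(p)=1+O(|z|^3)$ cannot be achieved by any change of coordinates: $du^0(p)$ is in general nonzero, and coordinate changes do not alter that. The right device is to factor out $u^0$. Write $\log K=\log|u^0|^2+\log(1+t)$ with $t=\sum_{j\ge1}|v^j|^2$ and $v^j:=u^j/u^0$.

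\begin{itemize}
\item The first term is pluriharmonic, so the $2$-jet of $u^0$ drops out on its own. It is not cancelled by the $\Gamma$-term, as you suggest.
\item Use $v^j(p)=0$, $dv^j(p)=0$ for $j>n$, and $t^2=O(|z-p|^4)$. The $(2,2)$-part of $t-t^2/2$ then gives $-\partial_X\partial_Y\partial_{\bar X}\partial_{\bar Y}\log K(p)=|X|_g^2|Y|_g^2+|\langle X,Y\rangle_g|^2-\sum_{j\ge1}|\partial_X\partial_Yv^j(p)|^2$.
\item Only $t$ contributes to the third derivatives: $\partial_i\partial_k\partial_{\bar\beta}\log K(p)=\sum_{j\le n}\partial_i\partial_kv^j(p)\,\overline{\partial_\beta v^j(p)}$. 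Since $(\partial_kv^j(p))_{j,k\le n}$ is invertible, the $\Gamma$-term contracted with $X,\bar X,Y,\bar Y$ equals $\sum_{j\le n}|\partial_X\partial_Yv^j(p)|^2$. This cancels the $j\le n$ terms above.
\item For $j>n$, $u^j$ vanishes to second order at $p$, so $\partial_X\partial_Yv^j(p)=\partial_X\partial_Yu^j(p)/u^0(p)$.
\end{itemize}

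This is exactly your claimed identity. Your normalization by $|X|_g^2|Y|_g^2=(I^0)^2/(I^1(X)I^1(Y))$ then finishes the proof.
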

\begin{rmk}
For later uses, we define the following tensor.
$$
    T_{g_{\Omega,\mu}}(p;X,Y):=2-S_{g_{\Omega,\mu}}(p;X,Y)-B_{g_{\Omega,\mu}}(p;X,Y)=\frac{I^1_{\Omega,\mu}(p;X)I^1_{\Omega,\mu}(p;Y)}{I^2_{\Omega,\mu}(p;X,Y)I^0_{\Omega,\mu}(p)}.
$$
\end{rmk}
\begin{proof}
For the reader's convenience and to avoid redundancy, we provide a unified proof that also addresses the preceding theorem. 
In this proof, we omit the explicit weight notation for simplicity, as the structure of the argument remains invariant:
$$
K(z,w):=K_{\Omega,\mu}(z,w),\ \ \ \ K(z):=K_{\Omega,\mu}(z,z),\ \ \  \langle X,Y\rangle_{ g(p)}:=\langle X,Y\rangle_{ g_{\Omega,\mu}(p)}.
$$
Recall that the Riesz representation theorem tells us that any bounded linear functional $L:H\longrightarrow\mathbb{C}$
admits a unique vector $h\in H$ satisfying
$$
L(u)=\langle u,h\rangle
$$
for all $u\in H$.
Moreover, $h^\perp=\text{ker}(L)$ and
$$
||h||=||L||:=\sup\{|L(u)|\ :\ u\in H,\  ||u||=1\}.
$$
Consider the bounded functionals $L_0,L_j,L_X,L_{XY}$ on $H:=\mathcal{A}^2_\mu(\Omega)$, defined by
$$
L_0(u):=u(p),\ \ \ L_j(u):=\partial_ju(p),\ \ \ L_{X}(u):=\partial_{X}u(p),\ \ \ L_{XY}(u):=\partial_{X}\partial_{Y}u(p).
$$
By the reproducing property (\ref{eqn: reproducing property}) and its derivatives, one can notice that the corresponding representation vectors are
$$
h_0(z)=K(z,p),\ \ \ \ \ h_j(z)=K_{\overline{j}}(z,p):=\partial_{\overline{j}}K(z,p),
$$
$$
h_{X}(z)=K_{\overline{X}}(z,p):=\partial_{\overline{X}}K(z,p),\ \ \ \ 
h_{XY}(z)=K_{\overline{X}\overline{Y}}(z,p):=\partial_{\overline{X}}\partial_{\overline{Y}}K(z,p),
$$
where $\partial_{\overline{j}}:=\frac{\partial}{\partial \overline{w_j}}$ and $\partial_{\overline{X}}:=X_j\partial_{\overline{j}}$.\\

\item[(1)] For the minimizer of $I^0_{\Omega,\mu}(p)$, apply the reproducing property to obtain
$$
\frac{1}{I^0_{\Omega,\mu}(p)}=\frac{1}{\inf\{||u||^2\ :\  u(p)=1\}}=||L_0||^2=||h_0||^2=K(p).
$$
\medskip
\item[(2)] For the minimizer of $I^1_{\Omega,\mu}(p;X)$, consider the projection operator $P_0$ onto the subspace
$$
H_0:=h_0^\perp=\text{ker}(L_0)=\{u\in H\ :\ u(p)=0\}.
$$
Let
$$
\tilde{h}_X:=P_0(h_{X})=h_{X}-\frac{\langle h_{X},h_0\rangle}{||h_0||^2}h_0.
$$
Then we have
$$
\frac{1}{I^1_{\Omega,\mu}(p;X)}=||\tilde{h}_X||^2=||P_0h_X||^2=||h_X||^2-\frac{|\langle h_{X},h_0\rangle|^2}{||h_0||^2}.
$$
The reproducing property imply
$$
||h_0||^2=K(p),\ \ \ \langle h_{X},h_0\rangle=K_{\overline{X}}(p),\ \ \ ||h_X||^2=K_{X\overline{X}}(p)
$$
so that
$$
\frac{1}{I^1_{\Omega,\mu}(p;X)}=K_{X\overline{X}}(p)-\frac{K_X(p)K_{\overline{X}}(p)}{K(p)}=\langle X,X\rangle_{ g(p)}\frac{1}{I^0_{\Omega,\mu}(p)}.
$$
\medskip
\item[(3)] For the minimizer of $I^1_{\Omega,\mu}(p;X|Y)$ with $X\neq Y$, consider the subspace
$$
\tilde{H}_Y:=h_0^\perp\cap h_Y^\perp=\text{ker}(L_0)\cap \text{ker}(L_Y)=\{u\in H\ :\ u(p)=0,\ \partial_Yu(p)=0\}.
$$
Denote the projection operator $\tilde{P}_Y:H_0\longrightarrow\tilde{H}_Y$.
Then, we have
\begin{align*}
\frac{1}{I^1_{\Omega,\mu}(p;X|Y)}&
=||\tilde{P}_Y\circ P_0(h_X)||^2
=||\tilde{P}_Y(\tilde{h}_X)||^2
=||\tilde{h}_X||^2-\frac{|\langle \tilde{h}_X,\tilde{h}_Y\rangle|^2}{||\tilde{h}_Y||^2}\\
&=\frac{1}{I^1_{\Omega,\mu}(p;X)}-I^1_{\Omega,\mu}(p;Y)\frac{|\langle X,Y\rangle_{g(p)}|^2}{\left(I^0_{\Omega,\mu}(p)\right)^2},
\end{align*}
where the last equality follows from the reproducing property:
$$
\langle X,Y\rangle_{g(p)}
=\frac{K_{X\overline{Y}}K-K_{X}K_{\overline{Y}}}{K^2}
=I^0_{\Omega,\mu}(p)\langle \tilde{h}_X,\tilde{h}_Y\rangle.
$$
Hence, one can notice that the following symmetry.
$$
I^1_{\Omega,\mu}(p;X)I^1_{\Omega,\mu}(p;Y)\frac{|\langle X,Y\rangle|^2}{\left(I^0_{\Omega,\mu}(p)\right)^2}
=1-\frac{I^1_{\Omega,\mu}(p;X)}{I^1_{\Omega,\mu}(p;X|Y)}
=1-\frac{I^1_{\Omega,\mu}(p;Y)}{I^1_{\Omega,\mu}(p;Y|X)}.
$$ 
\medskip
\item[(4)] For the minimizer of $I^1_{\Omega,\mu}(p; \partial_k|_{<k})$, consider the subspace
$$
\tilde{H}_{<k}:=\bigcap_{j=0}^{k-1} h_j^\perp=\bigcap_{j=0}^{k-1}\text{ker}(L_j)=\{u\in H\ :\ u(p)=0,\ \partial_ju(p)=0,\ 1\leq \forall j<k\}.
$$
Denote the projection operator $\tilde{P}_{<k}:H_0\longrightarrow\tilde{H}_{<k}$.
Then, the projection of $\tilde{h}_k:=P_0(h_k)$ is given by
$$
\tilde{P}_{<k}(\tilde{h}_k)=\tilde{h}_k-\sum_{i=1}^{k-1}(\sum_{j=1}^{k-1}\tilde{g}^{\overline{j}i}\langle \tilde{h}_k,\tilde{h}_j\rangle)\tilde{h}_i
$$
where $(\tilde{g}^{\overline{j}i})$ is the inverse matrix of $(\tilde{g}_{i\overline{j}}):=(\langle \tilde{h}_i,\tilde{h}_j \rangle)$.
\begin{align*}
&\frac{1}{I^1_{\Omega,\mu}(p; \partial_k|_{<k})}
=||\tilde{P}_{<k}\circ P_0(h_k)||^2
=||\tilde{P}_{<k}(\tilde{h}_k)||^2
=\langle\tilde{P}_{<k}(\tilde{h}_k),\tilde{h}_k\rangle\\
&=||\tilde{h}_k||^2-\sum_{i=1}^{k-1}(\sum_{j=1}^{k-1}\tilde{g}^{\overline{j}i}\langle \tilde{h}_k,\tilde{h}_j\rangle)\langle\tilde{h}_i,\tilde{h}_k\rangle
=\tilde{g}_{k\overline{k}}-\sum_{i=1}^{k-1}\sum_{j=1}^{k-1}\tilde{g}^{\overline{j}i}\tilde{g}_{k\overline{j}}\tilde{g}_{i\overline{k}}
=\frac{\det\tilde{G}_k}{\det\tilde{G}_{k-1}},
\end{align*}
where $\tilde{G}_k:=(\tilde{g}_{i\overline{j}})_{1\leq i,j\leq k}$.
Note that
\begin{align*}
\tilde{g}_{i\overline{j}}:=\langle \tilde{h}_i,\tilde{h}_j \rangle
&=\langle h_i-\frac{\langle h_{i},h_0\rangle}{||h_0||^2}h_0,h_j-\frac{\langle h_{j},h_0\rangle}{||h_0||^2}h_0\rangle=\langle h_i,h_j\rangle-\frac{\langle h_{i},h_0\rangle\overline{\langle h_{j},h_0\rangle}}{||h_0||^2}\\
&=
K\left(\frac{KK_{i\overline{j}}-K_iK_{\overline{j}}}{K^2}\right)=Kg_{i\overline{j}}.
\end{align*}
Therefore, we have
$$
\prod\limits_{k=1}^n\frac{1}{I^1_{\Omega,\mu}(p; \partial_k|_{<k} )}
=\frac{g_{1\overline{1}}(p)}{I^0_{\Omega,\mu}(p)}\prod\limits_{k=2}^n\frac{\det\tilde{G}_k}{\det\tilde{G}_{k-1}}=K_{\Omega,\mu}(p)^n\det G_{\Omega,\mu}(p)=K_{\Omega,\mu}(p)^{n+1}J_{\Omega,\mu}(p).
$$
\medskip
\item[(5)] For the minimizer of $I^2_{\Omega,\mu}(p;X,Y)$, consider the subspace
$$
\tilde{H}_{d}:=\bigcap_{j=0}^{n} h_j^\perp=\bigcap_{j=0}^{n}\text{ker}(L_j)=\{u\in H\ :\ u(p)=0,\ du(p)=0\}.
$$
Denote the projection operator $\tilde{P}_{d}:H_0\longrightarrow\tilde{H}_{d}$.
The projection of $\tilde{h}_{XY}:=P_0(h_{XY})$ is given by
$$
\tilde{P}_{d}(\tilde{h}_{XY})=\tilde{h}_{XY}-\sum_{i=1}^{n}(\sum_{j=1}^{n}\tilde{g}^{\overline{j}i}\langle \tilde{h}_{XY},\tilde{h}_j\rangle)\tilde{h}_i
$$
Then, we have
\begin{align*}
&\frac{1}{I^2_{\Omega,\mu}(p;X,Y)}
=||\tilde{P}_{d}\circ P_0(h_{XY})||^2
=||\tilde{P}_{d}(\tilde{h}_{XY})||^2
=\langle\tilde{P}_{d}(\tilde{h}_{XY}),\tilde{h}_{XY}\rangle\\
&=||\tilde{h}_{XY}||^2-\sum_{i=1}^{n}(\sum_{j=1}^{n}\tilde{g}^{\overline{j}i}\langle \tilde{h}_{XY},\tilde{h}_j\rangle)\langle\tilde{h}_i,\tilde{h}_{XY}\rangle.
\end{align*}
Note that
$$
\tilde{h}_{XY}:=P_0(h_{XY})=h_{XY}-\frac{\langle h_{XY},h_0\rangle}{||h_0||^2}h_0.
$$
The reproducing property implies that
$$
||\tilde{h}_{XY}||^2=||P_0(h_{XY})||^2=\langle P_0(h_{XY}),h_{XY}\rangle=K_{X\overline{X}Y\overline{Y}}(p)-\frac{K_{\overline{X}\overline{Y}}(p)K_{XY}(p)}{K(p)},
$$
and
\begin{align*}
\langle \tilde{h}_{XY},\tilde{h}_j\rangle
&=\langle h_{XY}-\frac{\langle h_{XY},h_0\rangle}{||h_0||^2}h_0,h_j-\frac{\langle h_{j},h_0\rangle}{||h_0||^2}h_0\rangle=\langle h_{XY},h_j\rangle-\frac{\overline{\langle h_{j},h_0\rangle}\langle h_{XY},h_0\rangle}{||h_0||^2}\\
&=K_{XY\overline{j}}-\frac{K_{XY}K_{\overline{j}}}{K}=\frac{KK_{XY\overline{j}}-K_{XY}K_{\overline{j}}}{K}.
\end{align*}
Therefore,
\begin{align*}
&\frac{1}{I^2_{\Omega,\mu}(p;X,Y)}=
K_{X\overline{X}Y\overline{Y}}-\frac{K_{XY}K_{\overline{X}\overline{Y}}}{K}-\frac{1}{K^3}\sum_{i,j}g^{\overline{j}i}\left(KK_{XY\overline{j}}-K_{XY}K_{\overline{j}}\right)\left(KK_{\overline{X}\overline{Y}i}-K_{\overline{X}\overline{Y}}K_{i}\right)\\
&=K\left\{\frac{KK_{X\overline{X}Y\overline{Y}}-K_{XY}K_{\overline{X}\overline{Y}}}{K^2}-\frac{1}{K^4}\sum_{i,j}g^{\overline{j}i}\left(KK_{XY\overline{j}}-K_{XY}K_{\overline{j}}\right)\left(KK_{\overline{X}\overline{Y}i}-K_{\overline{X}\overline{Y}}K_{i}\right)\right\}
\end{align*}
Use the following identity for the curvature tensor {\cite[275p]{kobayashi1959geometry}}:
$$
R_{i\overline{j}k\overline{l}}=g_{i\overline{j}}g_{k\overline{l}}+g_{i\overline{l}}g_{k\overline{j}}-\frac{1}{K^2}\left(KK_{i\overline{j}k\overline{l}}-K_{ik}K_{\overline{j}\overline{l}}\right)+\frac{1}{K^4}\sum_{\overline{r}s}g^{\overline{r}s}\left(KK_{ik\overline{r}}-K_{ik}K_{\overline{r}}\right)\left(KK_{\overline{j}\overline{l}s}-K_{\overline{j}\overline{l}}K_{s}\right).
$$
The bisectional curvature is given by
\begin{align*}
&B_{g_{\Omega,\mu}}(p;X,Y)
:=\frac{\sum\limits R_{i\overline{j}k\overline{l}}(p)X_i\overline{X_j}Y_k\overline{Y_l}}{\langle X,X\rangle_{g(p)}\langle Y,Y\rangle_{g(p)}}\\
&=1+\frac{\langle X,Y\rangle_{g}\langle Y,X\rangle_{g}}{\langle X,X\rangle_{g}\langle Y,Y\rangle_{g}}-\frac{1}{K^2}\frac{\left(KK_{X\overline{X}Y\overline{Y}}-K_{XY}K_{\overline{X}\overline{Y}}\right)}{\langle X,X\rangle_{g}\langle Y,Y\rangle_{g}}\\&\ \ \ +\frac{1}{K^4}\frac{\sum_{\overline{r}s}g^{\overline{r}s}\left(KK_{XY\overline{r}}-K_{XY}K_{\overline{r}}\right)\left(KK_{\overline{X}\overline{Y}s}-K_{\overline{X}\overline{Y}}K_{s}\right)}{\langle X,X\rangle_{g}\langle Y,Y\rangle_{g}}\\
&=1+I^1_{\Omega,\mu}(p;X)I^1_{\Omega,\mu}(p;Y)\left\{\frac{|\langle X,Y\rangle_{g(p)}|^2}{\left(I^0_{\Omega,\mu}(p)\right)^2}-\frac{1}{I^2_{\Omega,\mu}(p;X,Y)I^0_{\Omega,\mu}(p)}\right\},
\end{align*}
as we required.
\end{proof}

\section{The weighted Bergman kernel, metric, and curvatures of the ball}
Let $\mathbb{B}^n_r$ be the ball in $\mathbb{C}^n$ with the radius $r$ centered at the origin.
Then, the classic Bergman kernel of $\mathbb{B}^n_r$ is
$$
K_{\mathbb{B}^n_r}(z):=\frac{n!}{(\pi r^2)^n}\left(\frac{r^2-|z|^2}{r^2}\right)^{-(n+1)}.
$$
The corresponding classic Bergman metric $g_{\mathbb{B}^n_r}$ is the unique complete K\"ahler-Einstein metric with Ricci curvature $-1$.
Consider the weight function
$$
\varphi_r:=\varphi^{\rm KE}_{\mathbb{B}^n_r}=\log\det(g_{\mathbb{B}^n_r})=\log\left(\frac{n+1}{r^2}\right)^n\left(\frac{r^2-|z|^2}{r^2}\right)^{-(n+1)}
$$
and the sequence of admissible weighted measures:
$$
d\mu^{m}_{r}:=d\mu^{\rm KE}_{\mathbb{B}^n_r,m}=e^{-(m-1)\varphi_r} d\lambda=\left(\frac{n+1}{r^2}\right)^{-(m-1)n}\left(\frac{r^2-|z|^2}{r^2}\right)^{(m-1)(n+1)}d\lambda.
$$

\begin{prop}\label{prop: bergman kernel for the ball}
The weighted Bergman kernel, metric, volume, canonical functions of $\mathbb{B}^n_r$ with respect to the K\"ahler-Einstein weight $\mu_r^m$ are given by
\begin{align*}
    K_{\mathbb{B}^n_r,\mu^{m}_{r}}(z)&=\frac{(n+1)^{(m-1)n}}{\pi^n r^{2mn}}\frac{(m(n+1)-1)!}{((m-1)(n+1))!}\left(\frac{r^2-|z|^2}{r^2}\right)^{-m(n+1)}\\
    &=\frac{1}{C_m}\frac{r^{2m}}{(r^2-|z|^2)^{m(n+1)}},\\
    G_{\mathbb{B}^n_r,\mu^{m}_{r}}(z)&=m(n+1)\left[\frac{\delta_{ij}}{r^2-|z|^2}+\frac{\overline{z_i}z_j}{(r^2-|z|^2)^2}\right]\\
    \det G_{\mathbb{B}^n_r,\mu^{m}_{r}}(z)&=(m(n+1))^n\frac{r^2}{(r^2-|z|^2)^{n+1}},\\
J_{\mathbb{B}^n_r,\mu^{m}_{r}}(z)&=C_m(m(n+1))^n\frac{(r^2-|z|^2)^{(m-1)(n+1)}}{r^{2(m-1)}},
    \end{align*}
where $C_m:=\frac{\pi^n}{(n+1)^{(m-1)n}}\frac{((m-1)(n+1))!}{(m(n+1)-1)!}$ ($C_1=\frac{\pi^n}{n!}$). [cf. Krantz-Yu]  
\end{prop}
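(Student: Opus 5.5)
The plan is to compute the kernel directly from a monomial orthonormal basis. The weight $\mu^m_r$ is radial, so distinct monomials $z^\alpha$ are orthogonal in $\mathcal{A}^2(\mathbb{B}^n_r,\mu^m_r)$. Writing $d\mu^m_r=c\,(1-|z|^2/r^2)^{s}d\lambda$ with $s=(m-1)(n+1)$ and $c=((n+1)/r^2)^{-(m-1)n}$, I first reduce to $r=1$ by the dilation $z=rw$. I then compute the norms with polar coordinates, or equivalently the standard Beta-integral identity
$$
\int_{\mathbb{B}^n_1}|w^\alpha|^2(1-|w|^2)^s\,d\lambda(w)=\frac{\pi^n\,\alpha!\,s!}{(|\alpha|+n+s)!}.
$$
Summing $|z^\alpha|^2/\|z^\alpha\|^2$ over $\alpha$ and grouping by $|\alpha|=k$ via the multinomial theorem gives
$$
K(w)=\frac{1}{\pi^n s!}\sum_k\frac{(k+n+s)!}{k!}|w|^{2k}=\frac{(n+s)!}{\pi^n s!}(1-|w|^2)^{-(n+s+1)},
$$
using $\sum_k\binom{k+N}{k}t^k=(1-t)^{-N-1}$. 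Since $n+s+1=m(n+1)$ and $(n+s)!=(m(n+1)-1)!$, restoring $c$ and the factors of $r$ from the change of variables ($d\lambda$ contributes $r^{2n}$, and the monomials contribute $r^{-2|\alpha|}$, which recombine into $|z|^2/r^2$) yields the first displayed formula. The second form is then an algebraic rewriting: the power of $r$ works out to $r^{-2mn}\cdot r^{2m(n+1)}=r^{2m}$, which fixes $C_m$. Admissibility is not an issue here, and Proposition \ref{prop: positivity of the weighted Bergman} is consistent with the result.

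Next, the metric. Since $\log K=\text{const}+2m\log r-m(n+1)\log(r^2-|z|^2)$, I differentiate with $\partial_i\partial_{\overline j}$:
$$
-\partial_i\partial_{\overline j}\log(r^2-|z|^2)=\frac{\delta_{ij}}{r^2-|z|^2}+\frac{\overline{z_i}z_j}{(r^2-|z|^2)^2}.
$$
This gives $G$ directly. For the determinant I use the matrix determinant lemma: $\det(aI+b\,\bar z z^T)=a^{n-1}(a+b|z|^2)$ with $a=(r^2-|z|^2)^{-1}$ and $b=a^2$. This gives $a^{n-1}\cdot a\cdot r^2/(r^2-|z|^2)=r^2/(r^2-|z|^2)^{n+1}$, times $(m(n+1))^n$. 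Finally, $J=\det G/K$ follows by dividing the two expressions, giving the exponent $m(n+1)-(n+1)=(m-1)(n+1)$ and $r^{2-2m}$.

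The only real obstacle is bookkeeping of constants in the kernel computation: the normalization $c$, the $r$-scaling, and the factorial identity linking $s!$ and $(n+s)!$ to $C_m$. To handle this I will check the case $m=1$, where the formula must reduce to the classical $K_{\mathbb{B}^n_r}$ with $C_1=\pi^n/n!$. I will also cross-check the result against the transformation formula in Theorem \ref{thm: invariance}, applied to the dilation $z\mapsto z/r$.
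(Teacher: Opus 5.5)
Your proposal is correct and follows the route the paper relies on. The paper gives no separate proof of this proposition; it cites Krantz--Yu and Forelli--Rudin, and in the proof of Lemma \ref{lem: minimum integrals of ball} it invokes the same orthonormal monomial basis you compute with. Your Beta-integral norms, the $r$-scaling to $r^{-2mn}$, the value of $C_m$, the rank-one determinant computation, and $J=\det G/K$ all check out. One point to state explicitly is that the monomials are complete in $\mathcal{A}^2(\mathbb{B}^n_r,\mu^m_r)$, not merely orthogonal. This follows by integrating the Taylor series of $f$ against $\overline{z^\beta}$ over subballs $\mathbb{B}^n_\rho$ and letting $\rho\to r$, which gives $\langle f,z^\beta\rangle=a_\beta\|z^\beta\|^2$.
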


\begin{rmk}
    In \cite{forelli1974projections}, Forelli and Rudin computed the Bergman kernel function of the unit ball $\mathbb{B}^n$ in $\mathbb{C}^n$ with the admissible weight $(1-|z|^2)^{m}$.
\end{rmk}

\begin{cor}
  The normalized weighted Bergman kernel, metric, volume, canonical functions of $\mathbb{B}^n_r$ are given by
\begin{align*}
    K_{\mathbb{B}^n_r,\mu^{m}_{r}}^{\frac{1}{m}}(z)
    &=\frac{1}{C_m^{1/m}}\frac{r^{2}}{(r^2-|z|^2)^{n+1}},\\
    \tilde{G}_{\mathbb{B}^n_r,\mu^{m}_{r}}(z)&=(n+1)\left[\frac{\delta_{ij}}{r^2-|z|^2}+\frac{\overline{z_i}z_j}{(r^2-|z|^2)^2}\right]\\
    \det \tilde{G}_{\mathbb{B}^n_r,\mu^{m}_{r}}(z)&=(n+1)^n\frac{r^2}{(r^2-|z|^2)^{n+1}},\\
    \tilde{J}_{\mathbb{B}^n_r,\mu^{m}_{r}}(z)&\equiv C_m^{1/m}(n+1)^n.
    \end{align*}  
\end{cor}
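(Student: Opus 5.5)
The statement is the corollary giving the normalized quantities for $\mathbb{B}^n_r$. I would derive each item directly from Proposition \ref{prop: bergman kernel for the ball} by taking $m$-th roots and rescaling by $1/m$. No new analysis is needed.

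\emph{Kernel.} From the second expression for the kernel,
$$K_{\mathbb{B}^n_r,\mu^m_r}(z)=\frac{1}{C_m}\frac{r^{2m}}{(r^2-|z|^2)^{m(n+1)}}.$$
Taking the positive $m$-th root gives $C_m^{-1/m}\,r^2\,(r^2-|z|^2)^{-(n+1)}$.

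\emph{Metric and determinant.} Since $\tilde g_{j\bar k}=\partial_j\partial_{\bar k}\tfrac1m\log K$, we have $\tilde G=\tfrac1m G$. Dividing the matrix in Proposition \ref{prop: bergman kernel for the ball} by $m$ gives $(n+1)\left[\frac{\delta_{ij}}{r^2-|z|^2}+\frac{\bar z_i z_j}{(r^2-|z|^2)^2}\right]$. Consequently $\det\tilde G=m^{-n}\det G=(n+1)^n r^2/(r^2-|z|^2)^{n+1}$.

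As a sanity check, I would recompute $\det\tilde G$ directly. The matrix is $(n+1)(r^2-|z|^2)^{-1}\left(I+\frac{\bar z z^T}{r^2-|z|^2}\right)$. By the matrix determinant lemma, the bracket has determinant $1+\frac{|z|^2}{r^2-|z|^2}=\frac{r^2}{r^2-|z|^2}$. This gives the same formula.

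\emph{Canonical function.} Dividing,
$$\tilde J=\frac{\det\tilde G}{K^{1/m}}=\frac{(n+1)^n r^2(r^2-|z|^2)^{-(n+1)}}{C_m^{-1/m}r^2(r^2-|z|^2)^{-(n+1)}}=C_m^{1/m}(n+1)^n,$$
which is constant. This is consistent with the remark that $\tilde J$ is a biholomorphic invariant and the ball is homogeneous.

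The only point needing care is the base case $m=1$. There, $C_1=\pi^n/n!$ must reproduce the classical kernel $\frac{n!}{(\pi r^2)^n}\left(\frac{r^2-|z|^2}{r^2}\right)^{-(n+1)}$. This checks out, since $\frac{n!}{\pi^n}\,r^{2}\,r^{2n}\,r^{-2n}(r^2-|z|^2)^{-(n+1)}$ matches. The bookkeeping of the powers of $r$ is the main place an error could creep in, so I would verify it explicitly in this way.
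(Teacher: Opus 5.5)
Your proposal is correct and follows the paper's route: the paper gives no separate proof, and the corollary follows from Proposition~\ref{prop: bergman kernel for the ball} by taking the $m$-th root of the kernel, using $\tilde G=\frac1m G$ (so $\det\tilde G=m^{-n}\det G$), and dividing to get the constant $\tilde J$. Your matrix-determinant-lemma recomputation and the $m=1$ check against the classical kernel are valid consistency checks but are not needed for the argument.
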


\begin{rmk}
   The computation shows that $\lim\limits_{m\rightarrow\infty}C_m^{1/m}=\frac{1}{(n+1)^n}$ and
$
\lim\limits_{m\rightarrow\infty}\tilde{J}_{\mathbb{B}^n_r,\mu^{m}_{r}}(z)\equiv1.
$
\end{rmk}

Proposition \ref{prop: bergman kernel for the ball} also yields the following

\begin{prop}\label{prop: tensors of the unit ball}
At the origin, the weighted Bergman metric and the bisectional curvatures are
\begin{align*}
  \langle X,Y\rangle_{g_{\mathbb{B}^n_r,\mu^{m}_{r}}(0)}&=\frac{m(n+1)}{r^2}\langle X,Y\rangle_{\mathbb{C}^n}, \\
  S_{\mathbb{B}^n_r,\mu^{m}_{r}}(0;X,Y)&=1-\frac{|\langle X,Y\rangle_{\mathbb{C}^n}|^2}{|X|_{\mathbb{C}^n}^2|Y|_{\mathbb{C}^n}^2}\\
  T_{\mathbb{B}^n_r,\mu_{r}^m}(0;X,Y)
    &=\left(1+\frac{1}{m(n+1)}\right)\left(1+\frac{|\langle X,Y\rangle_{\mathbb{C}^n}|^2}{|X|_{\mathbb{C}^n}^2|Y|_{\mathbb{C}^n}^2}\right)\\
  B_{\mathbb{B}^n_r,\mu^{m}_{r}}(0;X,Y)&=\frac{-1}{m(n+1)}\left(1+\frac{|\langle X,Y\rangle_{\mathbb{C}^n}|^2}{|X|_{\mathbb{C}^n}^2|Y|_{\mathbb{C}^n}^2}\right),
\end{align*}
where $\langle X,Y\rangle_{\mathbb{C}^n}:=\sum_{j=1}^nX_j\overline{Y_j}$ the standard hermitian inner product.    
\end{prop}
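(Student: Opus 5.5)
The plan is to compute everything at the origin directly from the explicit kernel in Proposition \ref{prop: bergman kernel for the ball}. For the other quantities I will either use the definitions of $S$ and $T$ together with Theorem \ref{thm: polarized bergman-fuks}, or compute the minimum integrals explicitly using the rotational symmetry of $\mu^m_r$.

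\emph{Metric.} Evaluating $G_{\mathbb{B}^n_r,\mu^m_r}(0)$ gives $g_{j\overline{k}}(0)=\frac{m(n+1)}{r^2}\delta_{jk}$. This yields the formula for $\langle X,Y\rangle_{g(0)}$, and hence the formula for $S$ by substitution into its definition.

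\emph{Curvature via minimum integrals.} The weight depends only on $|z|$, so monomials $z^\alpha$ are mutually orthogonal in $\mathcal{A}^2(\mathbb{B}^n_r,\mu^m_r)$. Consequently the extremal problems at $0$ are solved by homogeneous polynomials. For $I^0$ the minimizer is $1$, for $I^1(0;X)$ it is a linear function, and for $I^2(0;X,Y)$ it is a quadratic form $Q$ with $Q(0)=0$, $dQ(0)=0$ and $\partial_X\partial_Y Q=1$.

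Write $a_\alpha=\|z^\alpha\|^2$. On the ball these norms have the standard form $a_\alpha=c_k\,\alpha!/(n-1+k)!$ for $|\alpha|=k$, with a common constant depending on the radial integral. Hence on each degree-$k$ piece the weighted norm is a constant multiple of the Fischer norm $\sum_{|\alpha|=k}|c_\alpha|^2\alpha!$, which is unitarily invariant.

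For the quadratic problem I then minimize the Fischer norm of a symmetric $2$-tensor $Q$ subject to $Q(X,Y)=1$. The minimizer is proportional to the symmetrization $X^\flat\odot Y^\flat$. The minimal norm is $1/\|\mathrm{Sym}(\bar X\otimes\bar Y)\|^2$, where $\|\mathrm{Sym}\|^2=\frac12(|X|^2|Y|^2+|\langle X,Y\rangle|^2)$. This is exactly where the factor $1+\frac{|\langle X,Y\rangle|^2}{|X|^2|Y|^2}$ arises.

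The key ratio to evaluate is
\[
\frac{I^1(X)I^1(Y)}{I^2 I^0}=\frac{(a_{e_1})^2}{a_0\,a_{2e_1}}\cdot\frac{1}{2}\Bigl(1+\frac{|\langle X,Y\rangle|^2}{|X|^2|Y|^2}\Bigr).
\]
The Euclidean norms of $X$ and $Y$ cancel. The ratio $a_{e_1}^2/(a_0a_{2e_1})$ follows from the beta integrals
\[
\int_{\mathbb{B}^n}|z_1|^{2k}(1-|z|^2)^{N}\,d\lambda,\qquad N=(m-1)(n+1).
\]
These give $a_0\propto 1/(N+n)!$, $a_{e_1}\propto 1/(N+n+1)!$ and $a_{2e_1}\propto 2/(N+n+2)!$, all with the same constant. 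Therefore
\[
\frac{a_{e_1}^2}{a_0a_{2e_1}}=\frac{N+n+2}{2(N+n+1)}=\frac{m(n+1)+1}{2\,m(n+1)}.
\]
Combining, the ratio equals $\frac{m(n+1)+1}{m(n+1)}\cdot\bigl(1+\frac{|\langle X,Y\rangle|^2}{|X|^2|Y|^2}\bigr)$, which is the claimed $T$. I expect this to be the main obstacle: it requires tracking the factorial constants and handling the $\tfrac12$ from symmetrization correctly in the Fischer-norm minimization.

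Finally, $B=2-S-T$. Writing $c=\frac{|\langle X,Y\rangle|^2}{|X|^2|Y|^2}$, this gives
\[
B=2-(1-c)-\Bigl(1+\tfrac{1}{m(n+1)}\Bigr)(1+c)=-\tfrac{1}{m(n+1)}(1+c).
\]
As a consistency check, this agrees with differentiating $\log K=-m(n+1)\log(r^2-|z|^2)+\mathrm{const}$ twice more and using the usual curvature formula. That route gives the constant holomorphic sectional curvature $-\frac{2}{m(n+1)}\cdot\frac{r^2}{r^2}$ of a rescaled ball metric.
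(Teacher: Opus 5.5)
Your minimum-integral route is legitimate, but its central step is wrong as written. The ``Combining'' line also does not follow from your displays. With $\frac{a_{e_1}^2}{a_0a_{2e_1}}=\frac{m(n+1)+1}{2m(n+1)}$, your displayed identity gives $\frac{m(n+1)+1}{4m(n+1)}(1+c)$, which is a factor $4$ too small (here $c=\frac{|\langle X,Y\rangle|^2}{|X|^2|Y|^2}$).

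Test $X=Y=e_1$. The extremal function for $I^2(0;e_1,e_1)$ is $\tfrac12z_1^2$, so $I^2=\tfrac14a_{2e_1}$, while your display amounts to $I^2=a_{2e_1}$. Two factors of $2$ are lost:
\begin{itemize}
\item In degree $2$ the weighted norm is $\tfrac12a_{2e_1}$ times the Fischer norm, since $(2e_1)!=2$.
\item Write $Q=\tfrac12\sum_{i,j}q_{ij}z_iz_j$ with $q$ symmetric. The constraint $\partial_X\partial_YQ=1$ is $\sum q_{ij}X_iY_j=1$, whereas the Fischer norm is $\tfrac12\sum_{i,j}|q_{ij}|^2$. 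Its minimum is therefore $1/(2\|\mathrm{Sym}\|^2)=1/\bigl(|X|^2|Y|^2(1+c)\bigr)$, not $1/\|\mathrm{Sym}\|^2$.
\end{itemize}
The correct values are
\[
I^2(0;X,Y)=\frac{a_{2e_1}}{2|X|^2|Y|^2(1+c)},\qquad
\frac{I^1(X)I^1(Y)}{I^2I^0}=\frac{2a_{e_1}^2}{a_0a_{2e_1}}(1+c)=\Bigl(1+\frac{1}{m(n+1)}\Bigr)(1+c),
\]
consistent with Lemma \ref{lem: minimum integrals of ball}. The factor is not cosmetic: via $B=2-S-T$, your displayed $T$ would make $B(X,X)$ positive.

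The paper does not use minimum integrals here at all; it reads the proposition off the explicit kernel of Proposition \ref{prop: bergman kernel for the ball}. Since $\log K=-m(n+1)\log(r^2-|z|^2)+\mathrm{const}$, the first derivatives of $g_{j\overline{k}}$ vanish at $0$. Hence $R_{i\overline{j}k\overline{l}}(0)=-\frac{m(n+1)}{r^4}(\delta_{ij}\delta_{kl}+\delta_{il}\delta_{kj})$, which gives $B$ at once, and then $T=2-S-B$. The paper's logic runs opposite to yours: it uses this proposition to obtain Lemma \ref{lem: minimum integrals of ball}.

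Your ``consistency check'', done for the full tensor rather than only the holomorphic sectional curvature, is exactly the paper's argument and already proves the statement. Once the constant is fixed, your main route (monomial orthogonality plus Theorem \ref{thm: polarized bergman-fuks}) becomes an independent verification. It has the side benefit of testing that theorem on the ball, but it is longer, and it is precisely where the bookkeeping failed.
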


\begin{lem}\label{lem: minimum integrals of ball}
The minimum integrals of $\mathbb{B}^n_r$ at the origin are given by
\begin{align*}
    I^0_{\mathbb{B}^n_r,\mu^{m}_{r}}(0)&=C_mr^{2mn}\\
    I^1_{\mathbb{B}^n_r,\mu^{m}_{r}}(0;X)&=C_m\frac{r^{2(mn+1)}}{m(n+1)}\frac{1}{|X|_{\mathbb{C}^n}^2},\\
    I^1_{\mathbb{B}^n_r,\mu^{m}_{r}}(0;X|Y)&=C_m\frac{r^{2(mn+1)}}{m(n+1)}\frac{1}{|X|_{\mathbb{C}^n}^2\left(1-\frac{|\langle X,Y\rangle_{\mathbb{C}^n}|^2}{|X|_{\mathbb{C}^n}^2|Y|_{\mathbb{C}^n}^2}\right)},\\
    I^1_{\mathbb{B}^n_r,\mu^{m}_{r}}(0; \partial_k|_{<k})&=C_m\frac{r^{2(mn+1)}}{m(n+1)},\\
    I^2_{\mathbb{B}^n_r,\mu^{m}_{r}}(0;X,Y)&=C_m\frac{r^{2(mn+2)}}{m(n+1)(m(n+1)+1)}\frac{1}{|X|_{\mathbb{C}^n}^2|Y|_{\mathbb{C}^n}^2\left(1+\frac{|\langle X,Y\rangle_{\mathbb{C}^n}|^2}{|X|_{\mathbb{C}^n}^2|Y|_{\mathbb{C}^n}^2}\right)}.
\end{align*}
\end{lem}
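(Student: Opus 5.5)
The plan is to read off all five minimum integrals from the Bergman–Fuks identities (Theorem \ref{thm: bergman-fuks} and Theorem \ref{thm: polarized bergman-fuks}), fed with the explicit kernel, metric and curvature data of Proposition \ref{prop: bergman kernel for the ball} and Proposition \ref{prop: tensors of the unit ball}. The only input that still needs a direct computation is the curvature term $T$ at the origin.

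\textbf{Step 1: $I^0$ and $I^1(p;X)$.} Since $I^0=1/K$, Proposition \ref{prop: bergman kernel for the ball} at $z=0$ gives
$$I^0_{\mathbb{B}^n_r,\mu^m_r}(0)=C_m\,\frac{r^{2m(n+1)}}{r^{2m}}=C_m r^{2mn}.$$
Then $I^1(0;X)=I^0/|X|^2_g$. Using $|X|^2_{g(0)}=\frac{m(n+1)}{r^2}|X|^2_{\mathbb{C}^n}$ from Proposition \ref{prop: tensors of the unit ball}, this becomes
$$I^1(0;X)=C_m\frac{r^{2(mn+1)}}{m(n+1)}\frac{1}{|X|^2_{\mathbb{C}^n}}.$$

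\textbf{Step 2: $I^1(p;X|Y)$ and $I^1(p;\partial_k|_{<k})$.} For $I^1(0;X|Y)$, use $S=I^1(X)/I^1(X|Y)$ from Theorem \ref{thm: polarized bergman-fuks}. At the origin the metric is a multiple of the Euclidean one, so $S(0;X,Y)=1-\frac{|\langle X,Y\rangle_{\mathbb{C}^n}|^2}{|X|^2_{\mathbb{C}^n}|Y|^2_{\mathbb{C}^n}}$. Dividing $I^1(0;X)$ by this quantity gives the stated formula.

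For $I^1(0;\partial_k|_{<k})$, step (4) of the proof of Theorem \ref{thm: polarized bergman-fuks} gives $1/I^1=\det\tilde G_k/\det\tilde G_{k-1}$ with $\tilde G=K\,G$. At $0$ the matrix $G$ is diagonal with entries $m(n+1)/r^2$, so this ratio equals $K(0)\,m(n+1)/r^2$. Inverting gives $C_m r^{2(mn+1)}/(m(n+1))$. Equivalently, it is the $X=\partial_k$ case of $I^1(0;X|Y)$ with orthogonal constraints.

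\textbf{Step 3: $I^2(p;X,Y)$ and the curvature term $T$.} From the definition of $T$,
$$I^2(0;X,Y)=\frac{I^1(0;X)\,I^1(0;Y)}{T(0;X,Y)\,I^0(0)}.$$
Substituting Step 1 and $T=\bigl(1+\frac{1}{m(n+1)}\bigr)\bigl(1+\frac{|\langle X,Y\rangle|^2}{|X|^2|Y|^2}\bigr)$ produces
$$C_m\frac{r^{2(mn+2)}}{(m(n+1))^2}\cdot\frac{m(n+1)}{m(n+1)+1}\cdot\frac{1}{|X|^2|Y|^2\bigl(1+\frac{|\langle X,Y\rangle|^2}{|X|^2|Y|^2}\bigr)},$$
which matches the claimed expression.

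The main obstacle is justifying the value of $T$ in Proposition \ref{prop: tensors of the unit ball}, if it is not taken as given. To do this independently, I would compute $I^2$ directly.

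\emph{Orthogonality.} The weight is radial, so the monomials are orthogonal in $\mathcal{A}^2(\mathbb{B}^n_r,\mu^m_r)$. The extremal function for $I^2$ is therefore the minimal-norm quadratic form $q$ with $\partial_X\partial_Y q(0)=1$, and it suffices to work in the degree-two subspace.

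\emph{Norm of quadratic forms.} By unitary invariance, the squared norm of a quadratic form $q(z)=z^TAz$ (with $A$ symmetric) is $c_2\|A\|_{HS}^2$. The constant $c_2$ is a beta-type integral, obtainable by comparing the norms of $z_1^2$ and $z_1z_2$.

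\emph{Minimization.} The constraint reads $2X^TAY=1$. The Hilbert–Schmidt-minimal symmetric $A$ satisfying it is proportional to $\overline{X}\,\overline{Y}^T+\overline{Y}\,\overline{X}^T$. This is exactly what produces the factor $|X|^2|Y|^2+|\langle X,Y\rangle|^2$.

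The delicate point is computing $c_2$ relative to $C_m$ from the beta integral $\int_0^r t^{2n+3}(r^2-t^2)^{(m-1)(n+1)}\,dt$. That computation is where the factor $m(n+1)(m(n+1)+1)$ must come out correctly.
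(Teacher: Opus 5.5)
Your proposal is correct and matches the paper's own brief proof, which notes that the normalized monomials form an orthonormal basis and that one can alternatively read everything off from Proposition~\ref{prop: tensors of the unit ball} via the Bergman--Fuks identities; you take the latter as your main line and use the former as a cross-check for $I^2$. The constant you flag as delicate comes out right: expanding $K(z,w)=C_m^{-1}r^{2m}\left(r^2-\langle z,w\rangle_{\mathbb{C}^n}\right)^{-m(n+1)}$ in powers of $\langle z,w\rangle_{\mathbb{C}^n}$ gives $c_2=\|z_1^2\|^2=2C_m r^{2(mn+2)}/\bigl(m(n+1)(m(n+1)+1)\bigr)$, which yields exactly the stated $I^2$.
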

\begin{proof}
Note that every holomorphic function has the Taylor expansion on $\mathbb{B}^n_r$.
Since $\mathbb{B}^n_r$ is a Reinhardt domain and the weight $\mu^{m}_{r}$ is radially symmetric, the normalized monomials consist of an orthonormal basis of $\mathcal{A}^2(\mathbb{B}^n_r,\mu^{m}_{r})$. 
One can also use Proposition \ref{prop: tensors of the unit ball}.
\end{proof}

\begin{lem}\label{lem: tensor squeezing}
Let $\Omega$ be a bounded domain in $\mathbb{C}^n$ with an admissible weight  $d\mu_{\Omega}^m:=e^{-(m-1)\varphi_{\Omega}}d\lambda$.
    Suppose that $\mathbb{B}^n_r\subset\Omega\subset\mathbb{B}^n_R$ and $\varphi_R\leq\varphi_{\Omega}\leq\varphi_r$. Then, we have
    $$
    \left(\frac{r}{R}\right)^{2mn}|X|_{\mathbb{B}^n_R,\mu^{m}_{R}(0)}^2
    \leq
    |X|^2_{g_{\Omega},\mu_{\Omega}^m(0)}
    \leq
    \left(\frac{R}{r}\right)^{2mn}|X|_{\mathbb{B}^n_r,\mu^{m}_{r}(0)}^2
    $$
    $$
    \left(\frac{r}{R}\right)^{2(mn+1)}
    \leq
     \frac{S_{\Omega,\mu_{\Omega}^m}(0;X,Y)}{S_{\mathbb{B}^n,\mu_{\mathbb{B}^n}^m}(0;X,Y)}
    \leq
    \left(\frac{R}{r}\right)^{2(mn+1)}
    $$
    $$
    \left(\frac{r}{R}\right)^{4(mn+1)}
    \leq
    \frac{T_{\Omega,\mu_{\Omega}^m}(0;X,Y)}{T_{\mathbb{B}^n,\mu_{\mathbb{B}^n}^m}(0;X,Y)}
    \leq
    \left(\frac{R}{r}\right)^{4(mn+1)}
    $$
    $$
    C_m(m(n+1))^n\frac{r^{2mn(n+1)}}{R^{2n(mn+1)}}
    \leq
    J_{\Omega,\mu_{\Omega}^m}(0)
    \leq
    C_m(m(n+1))^n\frac{R^{2mn(n+1)}}{r^{2n(mn+1)}}
    $$
    $$
    C_m^{\frac{1}{m}}(n+1)^n\frac{r^{2mn(n+\frac{1}{m})}}{R^{2n(mn+1)}}
    \leq
    \tilde{J}_{\Omega,\mu_{\Omega}^m}(0)
    \leq
    C_m^{\frac{1}{m}}(n+1)^n\frac{R^{2mn(n+\frac{1}{m})}}{r^{2n(mn+1)}}
    $$
    
\end{lem}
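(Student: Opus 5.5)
The plan is to compare minimum integrals of $\Omega$ with those of the two balls, using Lemma \ref{lem: monotone}, and then substitute into the Bergman-Fuks formulas (Theorem \ref{thm: bergman-fuks} and Theorem \ref{thm: polarized bergman-fuks}).

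Since $\varphi_R\leq\varphi_\Omega\leq\varphi_r$ and $m\geq1$, we get $e^{-(m-1)\varphi_r}\leq e^{-(m-1)\varphi_\Omega}\leq e^{-(m-1)\varphi_R}$. That is, $\mu^m_r\leq\mu^m_\Omega$ on $\mathbb{B}^n_r$ and $\mu^m_\Omega\leq\mu^m_R$ on $\Omega$. Together with $\mathbb{B}^n_r\subset\Omega\subset\mathbb{B}^n_R$, Lemma \ref{lem: monotone} gives for every minimum integral $I^\bullet$ (namely $I^0$, $I^1(\cdot;X)$, $I^1(\cdot;X|Y)$, $I^1(\cdot;\partial_k|_{<k})$, $I^2$):
$$
I^\bullet_{\mathbb{B}^n_r,\mu^m_r}(0)\leq I^\bullet_{\Omega,\mu^m_\Omega}(0)\leq I^\bullet_{\mathbb{B}^n_R,\mu^m_R}(0).
$$
Strictly, Lemma \ref{lem: monotone} is stated for $j=0,1,2$. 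The conditional versions $I^1(\cdot;X|Y)$ and $I^1(\cdot;\partial_k|_{<k})$ follow by the identical restriction argument: a competitor on the larger domain restricts to a competitor on the smaller one with no larger norm.

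Next we feed these bounds into the quotient formulas, bounding each numerator and denominator separately. For the metric, $|X|^2=I^0/I^1(X)$, so
$$
\frac{I^0_r}{I^1_R(X)}\leq|X|^2_\Omega\leq\frac{I^0_R}{I^1_r(X)}.
$$
By Lemma \ref{lem: minimum integrals of ball}, $I^0_R/I^0_r=(R/r)^{2mn}$. Also $I^0_R/I^1_r(X)=(R/r)^{2mn}\,I^0_r/I^1_r(X)=(R/r)^{2mn}|X|^2_{\mathbb{B}_r}$, and symmetrically for the lower bound. Similarly $S=I^1(X)/I^1(X|Y)$ lies between $I^1_r(X)/I^1_R(X|Y)$ and $I^1_R(X)/I^1_r(X|Y)$. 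Each $I^1$ of the ball scales like $r^{2(mn+1)}$ with an $r$-independent factor, so these bounds equal $(r/R)^{2(mn+1)}S_{\mathbb{B}^n}$ and $(R/r)^{2(mn+1)}S_{\mathbb{B}^n}$. Here $S$ at the origin does not depend on the radius by Proposition \ref{prop: tensors of the unit ball}.

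For $T=I^1(X)I^1(Y)/(I^2 I^0)$ the total exponent is $2\cdot2(mn+1)-2(mn+2)-2mn=0$. Bounding the numerator by the $R$-ball and the denominator by the $r$-ball therefore gives the factor
$$
\frac{R^{4(mn+1)}}{r^{2(mn+2)}r^{2mn}}\cdot\frac{1}{R^{0}}
$$
relative to the ball's $T$. This needs care: numerator at $R$ over denominator at $r$ gives exactly $R^{4(mn+1)}/r^{4(mn+1)}$ times the radius-free ball value $T_{\mathbb{B}^n}$ from Proposition \ref{prop: tensors of the unit ball}.

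For $J=(I^0)^{n+1}/\prod_k I^1(\partial_k|_{<k})$, the same scheme gives
$$
J_\Omega\leq\frac{(C_mR^{2mn})^{n+1}}{\bigl(C_m r^{2(mn+1)}/(m(n+1))\bigr)^n}=C_m(m(n+1))^n\frac{R^{2mn(n+1)}}{r^{2n(mn+1)}},
$$
and the reverse inequality with $r$ and $R$ swapped. For $\tilde J$ we use $\tilde J=\det\tilde G/K^{1/m}=m^{-n}\det G\,K^{-1/m}=m^{-n}J\,K^{1-1/m}=m^{-n}J\,(I^0)^{-(1-1/m)}$. Bounding $J$ as above and $I^0$ between $C_mr^{2mn}$ and $C_mR^{2mn}$ yields the stated powers $r^{2mn(n+1/m)}$ after simplification, using $C_m\cdot C_m^{-(1-1/m)}=C_m^{1/m}$.

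The main obstacle is purely bookkeeping. We must make sure that, in each quotient, the radius-dependence of the ball's minimum integrals cancels to the correct power, and that the ball's value is independent of the radius, as for $S$ and $T$. The exponent check for $\tilde J$ is the most error-prone step, and I would verify it against the ball case $r=R$, where both bounds must collapse to $C_m^{1/m}(n+1)^n$.
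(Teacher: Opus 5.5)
Your overall strategy is the paper's: sandwich every minimum integral between those of $\mathbb{B}^n_r$ and $\mathbb{B}^n_R$ via Lemma~\ref{lem: monotone}, insert these bounds into the Bergman--Fuks quotients, and evaluate with Lemma~\ref{lem: minimum integrals of ball}. Your remark that the conditional integrals $I^1(\cdot;X|Y)$ and $I^1(\cdot;\partial_k|_{<k})$ obey the same restriction argument is correct. Your bounds for $|X|^2$, $S$, $T$ and $J$ come out exactly as stated. The $\tilde J$ step, however, does not give the stated inequality as you describe it.

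You write $\tilde J=m^{-n}J\,(I^0)^{-(1-1/m)}$ and then bound $J$ by the inequality just proved and $I^0$ separately. But $J$ already contains $(I^0)^{n+1}$, so $I^0$ occurs twice with exponents of opposite sign. Bounding the two occurrences independently uses $I^0\le I^0_{\mathbb{B}^n_R}$ in one place and $I^0\ge I^0_{\mathbb{B}^n_r}$ in the other. Carried out literally, the upper bound becomes
$$
m^{-n}\,C_m(m(n+1))^n\frac{R^{2mn(n+1)}}{r^{2n(mn+1)}}\,\left(C_m r^{2mn}\right)^{-(1-\frac{1}{m})}
=C_m^{\frac{1}{m}}(n+1)^n\left(\frac{R}{r}\right)^{2mn(n+1)}.
$$
The statement instead claims $C_m^{1/m}(n+1)^n\left(\frac{R}{r}\right)^{2mn(n+\frac{1}{m})}=C_m^{1/m}(n+1)^n\left(\frac{R}{r}\right)^{2n(mn+1)}$. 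So your route loses a factor $(R/r)^{2n(m-1)}$, and the lower bound loses the same factor. Your proposed check $r=R$ cannot detect this, because every bound of this shape collapses to $C_m^{1/m}(n+1)^n$ there.

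The fix is to combine before estimating:
$$
\tilde J_{\Omega,\mu}(0)=\frac{\left(I^0_{\Omega,\mu}(0)\right)^{n+\frac{1}{m}}}{m^n\prod_{k=1}^n I^1_{\Omega,\mu}(0;\partial_k|_{<k})}.
$$
Now $I^0$ appears only in the numerator, with a positive exponent. Bound the numerator by the $R$-ball and the denominator by the $r$-ball, and conversely for the lower bound. This gives exactly the stated inequalities, and it is the formula the paper's proof uses.
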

\begin{proof}
    Recall that Theorem \ref{thm: bergman-fuks} and \ref{thm: polarized bergman-fuks} tells us that
    $$
    |X|^2_{g_{\Omega},\mu_{\Omega}^m(p)}=\frac{I^0_{\Omega,\mu_{\Omega}^m}(p)}{I^1_{\Omega,\mu_{\Omega}^m}(p;X)},\ \ \ \ \ \ \ 
    S_{\Omega,\mu_{\Omega}^m}(p;X,Y)=\frac{I^1_{\Omega,\mu_{\Omega}^m}(p;X)}{I^1_{\Omega,\mu_{\Omega}^m}(p;X|Y)},
    $$
    $$
    T_{\Omega,\mu_{\Omega}^m}(p;X,Y)=\frac{I^1_{\Omega,\mu_{\Omega}^m}(p;X)I^1_{\Omega,\mu_{\Omega}^m}(p;Y)}{I^2_{\Omega,\mu_{\Omega}^m}(p;X,Y)I^0_{\Omega,\mu_{\Omega}^m}(p)}.
    $$
    For the ball of any radius, we have
    \begin{align*}
    S_{\mathbb{B}^n,\mu_{\mathbb{B}^n}^m}(0;X,Y)&=1-\frac{|\langle X,Y\rangle_{\mathbb{C}^n}|^2}{|X|^2_{ \mathbb{C}^n}|Y|^2_{\mathbb{C}^n}}\\
    T_{\mathbb{B}^n,\mu_{\mathbb{B}^n}^m}(0;X,Y)
    &=\left(1+\frac{1}{m(n+1)}\right)\left(1+\frac{|\langle X,Y\rangle_{\mathbb{C}^n}|^2}{|X|_{\mathbb{C}^n}^2|Y|_{\mathbb{C}^n}^2}\right).
    \end{align*}
    For the (normalized) Bergman canonical function, note that
    $$
    J_{\Omega,\mu}(p)=\frac{\left(I^0_{\Omega,\mu}(p)\right)^{n+1}}{\prod\limits_{k=1}^nI^1_{\Omega,\mu}(p; \partial_k|_{<k} )},\ \ \ \ \ \ \ 
    \tilde{J}_{\Omega,\mu}(p)=\frac{\left(I^0_{\Omega,\mu}(p)\right)^{n+\frac{1}{m}}}{m^n\prod\limits_{k=1}^nI^1_{\Omega,\mu}(p; \partial_k|_{<k} )}.
    $$
    Therefore, the conclusion follows from Lemma \ref{lem: monotone} and Lemma \ref{lem: minimum integrals of ball}.
\end{proof}


\section{Asymptotic behavior of the bisectional curvatures of the Bergman metrics with the K\"ahler-Einstein weights}

In this section, we prove the main theorems.

\begin{thm}\label{thm: quantative estimates}
Let $\Omega$ be a bounded pseudoconvex domain and let $p\in\Omega$. Then, the bisectional curvature of the weighted Bergman metric $g^{\rm KE}_{\Omega,m}$ satisfies the following inequality:
\begin{align*}
    &D_m\left(1-s_{\Omega}(p)^{-2(mn+1)}\right)\left(2+s_{\Omega}(p)^{-2(mn+1)}\right)\\
    \leq&\
    B_{\Omega,m}^{KE}(p;X,Y)+\frac{1}{m(n+1)}\left(1+\frac{|\langle X,Y\rangle_{ g^{\rm KE}_{\Omega,m}(p)}|^2}{|X|_{g^{\rm KE}_{\Omega,m}(p)}^2|Y|_{g^{\rm KE}_{\Omega,m}(p)}^2}\right)\\
    \leq&\
    2D_m\left(1-s_{\Omega}(p)^{4(mn+1)}\right),
\end{align*}
where $D_m:=\frac{m(n+1)+1}{m(n+1)}$.
Moreover, the holomorphic sectional curvature satisfies that
$$
2D_m\left(1-s_{\Omega}(p)^{-4(mn+1)}\right)
    \leq
    H_{\Omega,m}^{KE}(p;X)+\frac{2}{m(n+1)}
    \leq
    2D_m\left(1-s_{\Omega}(p)^{4(mn+1)}\right).
$$
\end{thm}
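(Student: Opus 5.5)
Write $s:=s_\Omega(p)$. By invariance (Theorem \ref{thm: invariance} and the fact that $\mathcal{M}^{\rm KE}_m$ is canonical), every quantity appearing in the statement is unchanged under a biholomorphism $F$. The bisectional curvature, the ratio $|\langle X,Y\rangle|^2/|X|^2|Y|^2$, and hence $S$ and $T$ are all pulled back by $F$. So I first apply the Deng--Guan--Zhang extremal map $f_p$ and replace $\Omega$ by $f_p(\Omega)$, with $p$ replaced by $0$. Then $\mathbb{B}^n_s\subset\Omega\subset\mathbb{B}^n_1$.

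Next I check the hypothesis $\varphi_1\le\varphi_\Omega\le\varphi_s$ of Lemma \ref{lem: tensor squeezing}. This is the Kähler--Einstein version of monotonicity: by the Schwarz--Yau lemma / Mok--Yau comparison, the complete KE metric decreases under inclusion, $g^{\rm KE}_{\Omega_2}\le g^{\rm KE}_{\Omega_1}$ for $\Omega_1\subset\Omega_2$. The potentials therefore compare, $\det g^{\rm KE}_{\mathbb{B}_1}\le \det g^{\rm KE}_\Omega\le\det g^{\rm KE}_{\mathbb{B}_s}$ on $\mathbb{B}_s$. This yields $\mu_1\le\mu_\Omega$ on $\Omega$ and $\mu_\Omega\le\mu_s$ on $\mathbb{B}_s$, which is exactly what Lemma \ref{lem: monotone} needs. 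Lemma \ref{lem: tensor squeezing} with $r=s$, $R=1$ then gives
\[
s^{2(mn+1)}\le \frac{S_\Omega}{S_{\mathbb{B}}}\le s^{-2(mn+1)},\qquad s^{4(mn+1)}\le\frac{T_\Omega}{T_{\mathbb{B}}}\le s^{-4(mn+1)} .
\]
Here $S_{\mathbb{B}}=1-c$ and $T_{\mathbb{B}}=D_m(1+c)$, with $c:=|\langle X,Y\rangle_{\mathbb{C}^n}|^2/|X|^2|Y|^2$ (Euclidean, at $0$ of the ball).

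For the assembly, let $\hat c$ denote the $g^{\rm KE}_{\Omega,m}$-angle term, so $S_\Omega=1-\hat c$. By Theorem \ref{thm: polarized bergman-fuks},
\[
E:=B+\tfrac{1}{m(n+1)}(1+\hat c)=2-S_\Omega-T_\Omega+\tfrac{1}{m(n+1)}(2-S_\Omega)=D_m(2-S_\Omega)-T_\Omega .
\]
For the upper bound I use $S_\Omega\ge0$ trivially, or more sharply $T_\Omega\ge s^{4(mn+1)}D_m(1+c)$ together with $S_\Omega\ge s^{2(mn+1)}(1-c)\ge s^{4(mn+1)}(1-c)$. This gives $E\le D_m\bigl(2-s^{4(mn+1)}(1-c)-s^{4(mn+1)}(1+c)\bigr)=2D_m(1-s^{4(mn+1)})$.

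For the lower bound, $T_\Omega\le s^{-4(mn+1)}D_m(1+c)$ and $S_\Omega\le s^{-2(mn+1)}(1-c)$. Writing $a=s^{-2(mn+1)}\ge1$, I get $E\ge D_m\bigl(2-a(1-c)-a^2(1+c)\bigr)$. This expression is linear in $c\in[0,1]$ with coefficient $a-a^2\le0$, so its minimum is at $c=1$, giving $D_m(2-2a^2)$. That is weaker than the stated $D_m(1-a)(2+a)=D_m(2-a-a^2)$, which is the value at $c=0$. This mismatch is the main obstacle. To reach the stated form I would also exploit the upper bound $\hat c\le 1$, i.e. $S_\Omega\ge 0$ is used in the opposite direction, by replacing $S_\Omega$ via $2-S_\Omega=1+\hat c$. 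That would pair the $T$ estimate against $1+c$ where $c$ and $\hat c$ are related through the metric comparison of Lemma \ref{lem: tensor squeezing}. If this pairing fails, the bound $D_m(2-a-a^2)$ may require treating $c$ as fixed by the estimates, and I would check the $c$-dependence carefully.

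For the holomorphic sectional curvature I take $X=Y$, so $S=0$, $\hat c=c=1$, and $E=2D_m-T_\Omega$ with $T_{\mathbb{B}}=2D_m$. The two-sided $T$ estimate then gives $2D_m(1-s^{-4(mn+1)})\le H+\tfrac{2}{m(n+1)}\le 2D_m(1-s^{4(mn+1)})$ directly.
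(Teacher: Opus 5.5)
Your argument follows the paper's proof step for step. You reduce to $f_p(\Omega)$ at $0$ by invariance, write $E=D_m(2-S_\Omega)-T_\Omega$, compare the weights via the Schwarz lemma for volume forms, and insert the ratio bounds of Lemma~\ref{lem: tensor squeezing}. Your upper bound is correct: you use $S_\Omega\ge b(1-c)\ge b^2(1-c)$ with $b=s^{2(mn+1)}$, so the $c$-dependence cancels. Both holomorphic sectional curvature bounds are also correct.

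For the bisectional lower bound your diagnosis is right, and no rearrangement of these estimates removes the obstruction.

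\textbf{The case $X=Y$.} Here $S_\Omega=0$ and $\hat c=c=1$ are pinned. With $a=s^{-2(mn+1)}$, the claimed inequality becomes $T_\Omega(0;X,X)\le D_m(a+a^2)$. Lemma~\ref{lem: tensor squeezing} gives only $T_\Omega\le 2D_m a^2$, and $2a^2>a+a^2$ whenever $a>1$. So your proposed pairing of $\hat c$ against $c$ has nothing to act on.

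\textbf{The general case.} The lemma links $\hat c$ and $c$ only through $a^{-1}\le(1-\hat c)/(1-c)\le a$. Together with $0\le\hat c\le 1$ this yields $E\ge D_m\bigl(\max\{1,\,2-a(1-c)\}-a^2(1+c)\bigr)$. Both branches are nonincreasing in $c$, so the minimum over $c\in[0,1]$ is $2D_m(1-a^2)$, attained at $c=1$.

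\textbf{Comparison with the paper.} The paper's proof uses nothing beyond these same estimates and the Cauchy--Schwarz bound $0\le c\le 1$. So it also establishes only $2D_m\bigl(1-s_\Omega(p)^{-4(mn+1)}\bigr)$. The printed $D_m(1-a)(2+a)=D_m(2-a-a^2)$ is the value of your linear expression at the wrong endpoint $c=0$. This matches the paper's own holomorphic sectional lower bound $2D_m(1-a^2)$, which is weaker (strictly, when $s_\Omega(p)<1$) than what the printed bisectional bound would give at $X=Y$. So the lower bound this method actually proves is $2D_m\bigl(1-s_\Omega(p)^{-4(mn+1)}\bigr)$. It still tends to $0$ as $s_\Omega(p)\to 1$, so Theorem~\ref{thm: asymptotic behavior of the curvatures} is unaffected.

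\textbf{Two smaller corrections.}
\begin{enumerate}
\item Since $\mu=(\det g^{\rm KE})^{-(m-1)}$, the determinant comparison gives $\mu_s\le\mu_\Omega$ on $\mathbb{B}^n_s$ and $\mu_\Omega\le\mu_1$ on $\Omega$ (smaller domain, smaller weight). That is what Lemma~\ref{lem: monotone} requires; you wrote both inequalities reversed.
\item Only the volume-form Schwarz lemma is available here. A metric comparison $g^{\rm KE}_{\Omega_2}\le g^{\rm KE}_{\Omega_1}$ would need a negative upper bound on the holomorphic sectional curvature of the target metric. That is not known for the inclusion $\mathbb{B}^n_s\subset\Omega$ with $\Omega$ a general pseudoconvex domain.
\end{enumerate}
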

\begin{proof}
Since the $\mathcal{M}^{\rm KE}_m$-Bergman metric $g^{\rm KE}_{\Omega,m}:=g_{\Omega,\mu^{\rm KE}_{\Omega,m}}$ is invariant under biholomorphisms, we have
\begin{align*}
&B_{\Omega,m}^{KE}(p;X,Y)+\frac{1}{m(n+1)}\left(1+\frac{|\langle X,Y\rangle_{ g^{\rm KE}_{\Omega,m}(p)}|^2}{|X|_{g^{\rm KE}_{\Omega,m}(p)}^2|Y|_{g^{\rm KE}_{\Omega,m}(p)}^2}\right)\\
=&
B_{\Omega,m}^{KE}(p;X,Y)+\frac{1}{m(n+1)}\left(2-S_{\Omega,m}^{KE}(p;X,Y)\right)\\
=&
B_{f_p(\Omega),m}^{KE}(0;df_p(X),df_p(Y))+\frac{1}{m(n+1)}\left(2-S_{f_p(\Omega),m}^{KE}(0;df_p(X),df_p(Y))\right)\\
=&
\frac{m(n+1)+1}{m(n+1)}\left(2-S_{f_p(\Omega),m}^{KE}(0;df_p(X),df_p(Y))\right)-T_{f_p(\Omega),m}^{KE}(0;df_p(X),df_p(Y)).
\end{align*}
Since $\mathbb{B}^n_{s_{\Omega}(p)}\subset f_p(\Omega)\subset \mathbb{B}^n_1$,
the Schwarz lemma for the volume forms \cite{cheng1980existence,mok1983completeness} of the K\"ahler-Einstein metrics with Ricci curvature $-1$ implies that
$$
\det\left(g^{\rm KE}_{\mathbb{B}^n_1}\right)
\leq
\det\left(g^{\rm KE}_{f_p(\Omega)}\right)
\leq
\det\left(g^{\rm KE}_{\mathbb{B}^n_{s_{\Omega}(p)}}\right).
$$
Applying Lemma \ref{lem: tensor squeezing}, we obtain
 $$
    \left(s_{\Omega}(p)\right)^{2(mn+1)}
    \leq
    \frac{S_{f_p(\Omega),m}^{KE}(0;df_p(X),df_p(Y))}{S_{\mathbb{B}^n,m}^{KE}(0;df_p(X),df_p(Y))}
    \leq
    \left(\frac{1}{s_{\Omega}(p)}\right)^{2(mn+1)},
    $$
    $$
    \left(s_{\Omega}(p)\right)^{4(mn+1)}
    \leq
    \frac{T_{f_p(\Omega),m}^{KE}(0;df_p(X),df_p(Y))}{T_{\mathbb{B}^n,m}^{KE}(0;df_p(X),df_p(Y))}
    \leq
    \left(\frac{1}{s_{\Omega}(p)}\right)^{4(mn+1)}.
    $$
Then, the conclusion follows from Proposition \ref{prop: tensors of the unit ball} and the Cauchy-Schwarz inequality:
$$
0\leq\frac{|\langle X,Y\rangle_{\mathbb{C}^n}|^2}{|X|_{\mathbb{C}^n}^2|Y|_{\mathbb{C}^n}^2}\leq1.
$$
\end{proof}

As a generalization of the classical results on the holomorphic sectional curvature of the Bergman metric for strongly pseudoconvex domains \cite{klembeck1978kahler,kim1996boundary,zhang2017intrinsic}, the following theorem is obtained as a corollary of Theorem \ref{thm: quantative estimates}. It extends the analysis to the ``bisectional" curvature of the ``weighted" Bergman metric $g^{\rm KE}_{\Omega,m}$ providing a more general geometric description.

\begin{thm}\label{thm: asymptotic behavior of the curvatures}
    Let $\Omega$ be a bounded pseudoconvex domain and let $\{p_j\} \subset \Omega$ be a sequence such that $\lim\limits_{j\rightarrow\infty}s_{\Omega}(p_j)=1$. Then, we have
    $$
    \lim\limits_{j\rightarrow\infty}\left(B_{\Omega,m}^{KE}(p_j;X,Y)+\frac{1}{m(n+1)}\left(1+\frac{|\langle X,Y\rangle_{ g^{\rm KE}_{\Omega,m}(p_j)}|^2}{|X|_{g^{\rm KE}_{\Omega,m}(p_j)}^2|Y|_{g^{\rm KE}_{\Omega,m}(p_j)}^2}\right)\right)=0.
    $$
\end{thm}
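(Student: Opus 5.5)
This follows directly from Theorem \ref{thm: quantative estimates}, applied at $p=p_j$ with $X,Y$ fixed, by a squeeze argument.

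The two-sided estimate reads
$$
D_m\left(1-s_{\Omega}(p_j)^{-2(mn+1)}\right)\left(2+s_{\Omega}(p_j)^{-2(mn+1)}\right)
\leq
Q_j
\leq
2D_m\left(1-s_{\Omega}(p_j)^{4(mn+1)}\right),
$$
where $Q_j$ is the quantity inside the limit. The constant $D_m=\frac{m(n+1)+1}{m(n+1)}$ depends only on $m$ and $n$, not on $j$, $X$, or $Y$. Both bounds are continuous functions of $s=s_{\Omega}(p_j)\in(0,1]$, and both vanish at $s=1$.

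Letting $s_{\Omega}(p_j)\to1$ gives the following behaviour of the bounds:
\begin{itemize}
\item $s^{-2(mn+1)}\to1$, so the lower bound tends to $D_m\cdot 0\cdot 3=0$;
\item $s^{4(mn+1)}\to1$, so the upper bound tends to $0$.
\end{itemize}
The squeeze theorem then gives $\lim_j Q_j=0$.

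The only point needing care is that the bounds are well defined along the sequence. For a bounded domain, $s_{\Omega}(p)>0$ at every point. Moreover, since $s_{\Omega}(p_j)\to1$, we have $s_{\Omega}(p_j)>1/2$ for large $j$, so the negative powers stay bounded. The estimate is uniform in the directions $X,Y$, so nothing more is needed; the main work was already done in Theorem \ref{thm: quantative estimates}.
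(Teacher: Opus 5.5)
Your squeeze argument is the paper's route: the theorem is stated there as an immediate corollary of Theorem \ref{thm: quantative estimates}, obtained by letting $s_{\Omega}(p_j)\to 1$ in bounds that are uniform in $X,Y$. One caveat about the input you cite: the proof of that theorem only justifies the weaker lower bound $2D_m\bigl(1-s_{\Omega}(p)^{-4(mn+1)}\bigr)$, the same bound it gives for $H$, and this also tends to $0$, so your conclusion stands. The reason is that the estimate $D_m\bigl(2-b(1-c)-b^2(1+c)\bigr)$, with $b=s_{\Omega}(p)^{-2(mn+1)}\geq 1$ and $c\in[0,1]$ the Euclidean angle term, is smallest at $c=1$ (parallel directions), whereas the stated bound is its value at $c=0$.
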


\begin{rmk}
    Note that by the biholomorphic invariance of the weighted Bergman metric $g^{\rm KE}_{\Omega,m}$, we have
$$
\frac{|\langle X,Y\rangle_{ g^{\rm KE}_{\Omega,m}(p)}|^2}{|X|_{g^{\rm KE}_{\Omega,m}(p)}^2|Y|_{g^{\rm KE}_{\Omega,m}(p)}^2}
=
\frac{|\langle df_{p}(X),df_{p}(Y)\rangle_{ g^{\rm KE}_{f_{p}(\Omega),m}(0)}|^2}{|df_{p}(X)|_{g^{\rm KE}_{f_{p}(\Omega),m}(0)}^2|df_{p}(Y)|_{g^{\rm KE}_{f_{p}(\Omega),m}(0)}^2}.
$$
On the other hand, the bisectional curvature of the unit ball is given by
\begin{align*}
B_{g_{\mathbb{B}^n,m}^{KE}}(p;X,Y)
&=
B_{g_{\mathbb{B}^n,m}^{KE}}(0;d\hat{f}_p(X),d\hat{f}_p(Y))\\
&=\frac{-1}{m(n+1)}\left(1+\frac{|\langle d\hat{f}_p(X),d\hat{f}_p(Y)\rangle_{\mathbb{C}^n}|^2}{|d\hat{f}_p(X)|_{\mathbb{C}^n}^2|d\hat{f}_p(Y)|_{\mathbb{C}^n}^2}\right)\\
&=\frac{-1}{m(n+1)}\left(1+\frac{|\langle d\hat{f}_p(X),d\hat{f}_p(Y)\rangle_{g_{\mathbb{B}^n,m}^{KE}}|^2}{|d\hat{f}_p(X)|_{g_{\mathbb{B}^n,m}^{KE}}^2|d\hat{f}_p(Y)|_{g_{\mathbb{B}^n,m}^{KE}}^2}\right),
\end{align*}
where $\hat{f}_p$ is a biholomorphism (squeezing map) of the unit ball sending $p$ to $0$.
Therefore, Theorem \ref{thm: asymptotic behavior of the curvatures} demonstrates that the bisectional curvature of the weighted Bergman metric $g^{\rm KE}_{\Omega,m}$ on a strongly pseudoconvex domain asymptotically coincides with that of the unit ball near the boundary.
\end{rmk}

As a corollary, we provide a streamlined alternative proof for the following curvature asymptotics of the K\"ahler-Einstein metric $g_{\Omega}^{KE}$, a result originally established by Cheng and Yau  {\cite[Proposition 1.5]{cheng1980existence}} and recently restated by Gontard {\cite[Theorem 1.4]{gontard2019kahler}}.

\begin{cor}[{\cite[Theorem 1.4]{gontard2019kahler}}]\label{cor: ke asymptotic}
    Let $\Omega$ be a bounded pseudoconvex domain and let $\{p_j\} \subset \Omega$ be a sequence such that $\lim\limits_{j\rightarrow\infty}s_{\Omega}(p_j)=1$.
    Then, we have
    $$
    \lim\limits_{j\rightarrow\infty}\left(B_{\Omega}^{KE}(p_j;X,Y)+\frac{1}{n+1}\left(1+\frac{|\langle X,Y\rangle_{ g^{\rm KE}_{\Omega}(p_j)}|^2}{|X|_{g^{\rm KE}_{\Omega}(p_j)}^2|Y|_{g^{\rm KE}_{\Omega}(p_j)}^2}\right)\right)=0.
    $$
\end{cor}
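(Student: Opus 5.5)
The plan is to deduce the corollary from Theorem \ref{thm: asymptotic behavior of the curvatures} by normalizing and then letting $m\to\infty$ via Theorem \ref{thm: tian ke seq}, interchanging the limits in $j$ and $m$ by means of the uniform estimates of Theorem \ref{thm: quantative estimates}.

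First, rescale. Multiply the inequality of Theorem \ref{thm: quantative estimates} by $m$. Since $\widetilde{B}^{\rm KE}_{\Omega,m}=mB^{\rm KE}_{\Omega,m}$, and the ratio $|\langle X,Y\rangle|^2/(|X|^2|Y|^2)$ is unchanged when $g^{\rm KE}_{\Omega,m}$ is replaced by $\widetilde{g}^{\rm KE}_{\Omega,m}=\frac1m g^{\rm KE}_{\Omega,m}$, we obtain for every $p$:
\begin{align*}
&mD_m\left(1-s^{-2(mn+1)}\right)\left(2+s^{-2(mn+1)}\right)\\
\leq&\ \widetilde{B}^{\rm KE}_{\Omega,m}(p;X,Y)+\frac{1}{n+1}\left(1+\frac{|\langle X,Y\rangle_{\widetilde{g}^{\rm KE}_{\Omega,m}(p)}|^2}{|X|^2_{\widetilde{g}^{\rm KE}_{\Omega,m}(p)}|Y|^2_{\widetilde{g}^{\rm KE}_{\Omega,m}(p)}}\right)\\
\leq&\ 2mD_m\left(1-s^{4(mn+1)}\right),
\end{align*}
where $s=s_\Omega(p)$.

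The main obstacle is that the bounds are not uniform in $m$. The factor $m$ in front, together with the exponents $mn$, means that for fixed $s<1$ the bounds blow up as $m\to\infty$. So the limits cannot naively be interchanged.

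To handle this, I would fix $m$ and pass to the limit along the sequence. Since $\{p_j\}$ with $s_\Omega(p_j)\to1$ eventually has squeezing bounded below, we may restrict to the set $\{s_\Omega\geq 1/2\}$. I would replace $\Omega$ there by a uniformly squeezing setting: by the invariance of all the quantities, only the images $f_{p_j}(\Omega)$ with $\mathbb{B}^n_{s}\subset f_{p_j}(\Omega)\subset\mathbb{B}^n_1$ matter. The key step is to show that $\widetilde{B}^{\rm KE}_{\Omega,m}\to B^{\rm KE}_\Omega$ uniformly on $\{s_\Omega\geq 1/2\}$. I expect this to follow from the proof of Theorem \ref{thm: tian ke seq}, which only uses uniform squeezing at the point, since the Tian-type expansion is local and controlled by bounded geometry of $g^{\rm KE}$.

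Given this uniformity, the argument is an $\epsilon/3$ argument. Choose $m$ so that both $|\widetilde{B}^{\rm KE}_{\Omega,m}-B^{\rm KE}_\Omega|<\epsilon$ and the metric-angle term differs by less than $\epsilon$ on that set; the metric convergence gives convergence of the angle ratio. Then choose $j$ large so that the $m$-fixed bounds above are less than $\epsilon$ in absolute value. This uses $s_\Omega(p_j)\to1$, which is exactly Theorem \ref{thm: asymptotic behavior of the curvatures} at level $m$.

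If the uniform convergence on $\{s_\Omega\ge1/2\}$ proves hard to justify, an alternative route is to apply the same argument directly to limit domains. Normalize $f_{p_j}(\Omega)$, which converge to $\mathbb{B}^n$ in the local Hausdorff sense. Then Cheng–Yau interior estimates give $C^\infty_{loc}$ convergence of $g^{\rm KE}_{f_{p_j}(\Omega)}$ to $g^{\rm KE}_{\mathbb{B}^n}$ at $0$, whose bisectional curvature is $-\frac{1}{n+1}(1+|\langle X,Y\rangle|^2/|X|^2|Y|^2)$, which is the $m\to\infty$ limit of Proposition \ref{prop: tensors of the unit ball} after rescaling.
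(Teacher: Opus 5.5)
Your main route is essentially the paper's: normalize by $m$, use Theorem~\ref{thm: asymptotic behavior of the curvatures} at fixed $m$, and interchange the limits in $j$ and $m$ via convergence $\widetilde{B}^{\rm KE}_{\Omega,m}\to B^{\rm KE}_\Omega$ that is uniform on $\{s_\Omega\geq 1/2\}$. The paper likewise asserts this uniformity rather than proving it: Theorem~\ref{thm: tian ke seq} as stated needs $\Omega$ to be uniformly squeezing, and passing to the images $f_{p_j}(\Omega)$ cannot supply this, since the squeezing function is a biholomorphic invariant. Your $\epsilon/3$ bookkeeping, and your remark that the bounds of Theorem~\ref{thm: quantative estimates} blow up as $m\to\infty$, make explicit the point the paper leaves implicit.

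Your fallback is a genuinely different argument that is complete on its own, modulo standard estimates. The Schwarz lemmas give two-sided bounds for $g^{\rm KE}_{f_{p_j}(\Omega)}$ on compacta: $g^{\rm KE}_{f_{p_j}(\Omega)}\geq c\,g^{\rm KE}_{\mathbb{B}^n_1}$ and $\det g^{\rm KE}_{\mathbb{B}^n_1}\leq\det g^{\rm KE}_{f_{p_j}(\Omega)}\leq\det g^{\rm KE}_{\mathbb{B}^n_{s_\Omega(p_j)}}$. Interior Monge--Amp\`ere estimates then give $C^\infty_{\rm loc}$ convergence to $g^{\rm KE}_{\mathbb{B}^n}$ near $0$. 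This route needs neither the weighted Bergman metrics nor any interchange of limits.
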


\begin{proof}
Note that the normalized weighted Bergman metric $\frac{1}{m} g^{\rm KE}_{\Omega,m}$ converges to the K\"ahler-Einstein metric $g^{\rm KE}_{\Omega}$ as $m \to \infty$. 
Although this convergence is uniform on uniform squeezing domains, the same conclusion can be drawn in our setting by focusing on the sequence $\{p_j\}$. 
Since we assume $\lim_{j\to\infty} s_{\Omega}(p_j) = 1$, the squeezing numbers along the sequence are bounded below by a positive constant. 
This sequential squeezing property ensures that the convergence of the curvatures is sufficiently well-behaved to justify the interchange of limits $\lim_{j\to\infty}$ and $\lim_{m\to\infty}$. Consequently, by taking the limit as $m \to \infty$ in Theorem \ref{thm: asymptotic behavior of the curvatures}, we obtain the desired asymptotic behavior for the K\"ahler-Einstein metric. 
\end{proof}

\textbf{Acknowledgements}.
The author dedicates this paper to Professor Kang-Tae Kim in honor of his outstanding contributions to several complex variables and complex geometry.
This work was supported by Incheon National University Research Grant in 2023.

\bibliographystyle{abbrev}
\bibliography{reference}

\end{document}